\newcommandx{\niels}[2][1=]{\todo[linecolor=red,backgroundcolor=red!25,bordercolor=red,#1]{#2}}
\newcommandx{\richard}[2][1=]{\todo[linecolor=blue,backgroundcolor=blue!25,bordercolor=blue,#1]{#2}}
\newcommandx{\paul}[2][1=]{\todo[linecolor=orange,backgroundcolor=orange!25,bordercolor=orange,#1]{#2}}
\newcommandx{\nielsin}[2][1=]{\niels[inline, caption={Niels}, #1]{%
\begin{minipage}{\textwidth-20pt}#2\end{minipage}}}
\newcommandx{\richardin}[2][1=]{\richard[inline, caption={Richard}, #1]{%
\begin{minipage}{\textwidth-30pt}#2\end{minipage}}}
\newcommandx{\paulin}[2][1=]{\paul[inline, caption={Paul}, #1]{%
\begin{minipage}{\textwidth-30pt}#2\end{minipage}}}
\newcommand{\vcxymatrix}[1]{\vcenter{\xymatrix{#1}}}
\newcounter{proof}
\newenvironment{myproof}%
{\stepcounter{proof}\begin{proof}}%
{\end{proof}}%
\newcounter{proofstep}[proof]
\newenvironment{proofstep}[1][]%
{\refstepcounter{proofstep}\smallskip\par\noindent%
  \ifthenelse{\isempty{#1}}
    {\textsc{Step \theproofstep.}}
    {\textsc{#1.}}
  \noindent}%
{\par}%
\newcounter{proofcase}[proof]
\newenvironment{proofcase}[1][]%
{\refstepcounter{proofcase}\smallskip\par\noindent%
  \ifthenelse{\isempty{#1}}
    {\textsc{Case \theproofcase.}}
    {\textsc{#1.}}
  \noindent}%
{\par}%
\theoremstyle{plain}
\newtheorem{thm}{Theorem}[section]
\newtheorem*{thm*}{Theorem}
\newtheorem{pro}[thm]{Proposition}
\newtheorem{cor}[thm]{Corollary}
\newtheorem{lem}[thm]{Lemma}
\newtheorem{que}[thm]{Question}
\theoremstyle{definition}
\newtheorem{dfn}[thm]{Definition}
\newtheorem{rem}[thm]{Remark}
\theoremstyle{remark}
\numberwithin{equation}{section}
\DeclareMathOperator{\union}{\cup}
\DeclareMathOperator{\isect}{\cap}
\newcommand{\umd}{\ensuremath{\text{UMD}}}
\newcommand{\bmo}[1][]{\ensuremath{\text{BMO}_{#1}}}
\newcommand{\charfun}{\scalebox{1.0}{\ensuremath{\mathbbm 1}}}
\newcommand{\dint}{\mathscr{D}}
\newcommand{\drec}{\mathscr{R}}
\newcommand{\dif}{\ensuremath{\, \mathrm d}}
\DeclareMathOperator{\spn}{span}
\DeclareMathOperator{\cond}{\mathbb{E}}
\DeclareMathOperator{\lesslex}{<_\ell}
\DeclareMathOperator{\drless}{\lhd\,}
\DeclareMathOperator{\drlesseq}{\unlhd\,}
\newcommand{\drindex}{\mathcal{O}_{\drless}}
\begin{document}

\title[Factorization of the identity operator]{Factorization of the identity through operators with
  large diagonal}

\author[N.J.~Laustsen]{Niels Jakob Laustsen}
\address{Niels Jakob Laustsen,
  Department of Mathematics and Statistics, Fylde College, Lancaster
  University, Lancaster LA1 4YF, United Kingdom}
\email{n.laustsen@lancaster.ac.uk}

\author[R.~Lechner]{Richard Lechner}
\address{Richard Lechner,
  Institute of Analysis,
  Johannes Kepler University Linz,
  Altenberger Strasse 69,
  A-4040 Linz, Austria}
\email{Richard.Lechner@jku.at}

\author[P.F.X.~M\"uller]{Paul F.X.~M\"uller}
\address{Paul F.X.~M\"uller,
  Institute of Analysis,
  Johannes Kepler University Linz,
  Altenberger Strasse 69,
  A-4040 Linz, Austria}
\email{Paul.Mueller@jku.at}

\date{\today}

\subjclass[2010]{46B25, 60G46, 30H10, 47B37, 47A53}

\keywords{Factorization of operators, classical Banach spaces, unconditional basis, mixed-norm Hardy
  spaces, combinatorics of dyadic rectangles, almost-diagonalization, projection, Fredholm theory,
  Gowers-Maurey spaces}

\thanks{The research of Lechner and M\"uller is supported by the Austrian Science Foundation (FWF)
  Pr.Nr.\ P23987 P22549}

\begin{abstract}  
  Given a Banach space~$X$ with an unconditional basis, we consider the following question: does the
  identity on~$X$ factor through every operator on~$X$ with large diagonal relative to the
  unconditional basis?  We show that on Gowers' unconditional Banach space, there exists an operator
  for which the answer to the question is negative.  By contrast, for any operator on the mixed-norm
  Hardy spaces $H^p(H^q)$, where $1 \leq p,q < \infty$, with the bi-parameter Haar system, this
  problem always has a positive solution.  The spaces  $L^p, 1 < p < \infty$,  were treated first by
  Andrew~[{\em Studia Math.}~1979].
\end{abstract}

\maketitle

\makeatletter
\providecommand\@dotsep{5}
\def\listtodoname{List of Todos}
\def\listoftodos{\@starttoc{tdo}\listtodoname}
\makeatother

\section{Introduction}\label{sec:intro}

\noindent
Let~$X$ be a Banach space. A \emph{basis} for~$X$ will always mean a Schauder basis.  We denote
by~$I_X$ the identity operator on~$X$, and write $\langle\cdot,\cdot\rangle$ for the bilinear
duality pairing between~$X$ and its dual space~$X^*$.  By an \emph{operator} on~$X$, we understand a
bounded and linear mapping from~$X$ into itself.

Suppose that $X$ has a normalized basis $(b_n)_{n\in\mathbb N}$, and let $b_n^*\in X^*$ be the
$n^{\text{th}}$ co\-or\-di\-nate functional. For an operator~$T$ on~$X$, we say that:
\begin{itemize}
\item $T$ \emph{has large diagonal} if $\inf_{n\in\mathbb N} |\langle Tb_n,b_n^*\rangle| >0$;
\item $T$ is \emph{diagonal} if $\langle Tb_m,b_n^*\rangle = 0$ whenever $m,n\in\mathbb N$ are
  distinct;
\item \emph{the identity operator on $X$ factors through~$T$} if there are operators~$R$ and~$S$
  on~$X$ such that the diagram
  \begin{equation*}
    \vcxymatrix{X \ar[r]^{I_X} \ar[d]_R & X\\
      X \ar[r]_T & X \ar[u]_S}
  \end{equation*}
  is commutative.
\end{itemize}
Suppose that the basis $(b_n)_{n\in\mathbb N}$ for~$X$ is unconditional.  Then the diagonal
operators on~$X$ correspond precisely to the elements of~$\ell_\infty(\mathbb N)$, and so for each
operator~$T$ on~$X$ with large diagonal, there is a diagonal operator~$S$ on~$X$ such that
$\langle STb_n,b_n^*\rangle = 1$ for each $n\in\mathbb N$.  This observation naturally leads to the
following question.
\begin{que}\label{mainquestion}
  Can the identity operator on~$X$ be factored through each operator on~$X$ with large diagonal?
\end{que}

In classical Banach spaces such as $\ell^p$ with the unit vector basis and $L^p$ with the Haar
basis, the answer to this question is known to be positive.  These are the theorems of
Pe{\l}czy{\'n}ski~\cite{pelczynski:1960} and Andrew~\cite{andrew:1979}, respectively; see also
Johnson, Maurey, Schechtman and Tzafriri~\cite[Chapter~9]{jmst:1979}.

The aim of the present paper is to establish the following two results.
\begin{itemize}
\item There exists a Banach space with an unconditional basis for which the answer to
  Question~\ref{mainquestion} is negative. This result relies heavily on the deep work of
  Gowers~\cite{gowers:1994} and Gowers-Maurey~\cite{gowers_maurey:1997}.
\item Question~\ref{mainquestion} has a positive answer for the mixed-norm Hardy space $H^p(H^q)$,
  where $1 \leq p,q <\infty$, with the bi-parameter Haar system as its unconditional basis. This
  conclusion can be viewed as a bi-parameter extension of Andrew's theorem~\cite{andrew:1979} on the
  perturbability of the one-parameter Haar system in~$L^p$.
\end{itemize}
The precise statements of these results, together with their proofs, are given in
Sections~\ref{sec:large} and~\ref{sec:andrew}, respectively.

\subsection*{Acknowledgements}
It is our pleasure to thank Th. Schlumprecht for very informative conversations and for encouraging
the collaboration between Lancaster and Linz.  Special thanks are due to J. B. Cooper (Linz) for
drawing our attention to the work of Andrew~\cite{andrew:1979}.

\section{The answer to Question~\ref{mainquestion} is not always positive}\label{sec:large}
\noindent
The aim of this section is to establish the following result, which answers
Ques\-tion~\ref{mainquestion} in the negative.
\begin{thm}\label{operatorlargediagonalnotfactorid}
  There is an operator~$T$ on a Banach space~$X$ with an unconditional basis such that $T$ has large
  diagonal, but the identity operator on~$X$ does not factor through~$T$.
\end{thm}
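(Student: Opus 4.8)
The plan is to build the counterexample on a space with an unconditional basis that is so ``rigid'' that no operator can reproduce the identity on any large subspace, and then exhibit a single operator with large diagonal that is nonetheless too small to permit a factorization of $I_X$. The natural candidate is a Gowers--Maurey type space---indeed the keyword list mentions Gowers' space with its unconditional basis---where every operator is a strictly singular perturbation of a diagonal (equivalently, scalar-plus-compact-type behaviour on subspaces), so that factorizations of the identity are severely constrained. Concretely, I would take $X$ to be Gowers' space admitting an unconditional basis $(b_n)$ with the hereditarily indecomposable-like property that every bounded operator $U$ on $X$ has the form $U = D + V$ where $D$ is diagonal (scalar on the basis up to a bounded multiplier) and $V$ is strictly singular; this structural dichotomy is the engine of the argument.

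First I would fix the operator $T$. The cleanest choice is a diagonal operator on a ``doubled'' basis: split $\mathbb N$ into two interleaved copies and let $T$ act as the identity on one copy and as a shift-type or zero-type map mixing the two copies on the other, arranged so that $\langle Tb_n,b_n^*\rangle$ is bounded below (large diagonal) but $T$ restricted to any infinite-dimensional block subspace is strictly singular or otherwise fails to be bounded-below. Equivalently, one can take $T = \tfrac12(I_X + W)$ where $W$ is a strictly singular operator of norm $\le 1$ chosen so that $T$ has large diagonal $\ge \tfrac12$; then $T$ itself is a strictly singular perturbation of $\tfrac12 I_X$, which keeps it invertible, so the real subtlety is to instead make $T$ NOT invertible while keeping the diagonal large---hence the doubled-basis construction is the right one, since there $T$ has large diagonal but large kernel-like behaviour on half the coordinates.

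Next I would show the identity does not factor through $T$. Suppose $RTS... $ wait: suppose $S T R = I_X$ for operators $R,S$ on $X$. Using the structural dichotomy, write $R = D_R + V_R$ and $S = D_S + V_S$ with $D_R,D_S$ diagonal and $V_R,V_S$ strictly singular. Then $I_X = S T R$ expands into a sum of terms; the ``main'' term $D_S T D_R$ is again diagonal (being a product of diagonal operators with the diagonal part of $T$), while every other term involves a strictly singular factor and the ideal property of the strictly singular operators makes it strictly singular. Hence $I_X$ would equal a diagonal operator plus a strictly singular operator; comparing diagonals forces the diagonal operator $D_S T D_R$ to be a small perturbation of $I_X$, hence invertible, which in turn forces $T$ to be bounded below on the range of $D_R$---but by construction $T$ kills (or is strictly singular on) an infinite-dimensional subspace that $D_R$ cannot avoid, a contradiction. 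Making this last sentence precise---controlling where the diagonal operator $D_R$ can have its ``mass'' and intersecting it with the bad half of the doubled basis---is the main obstacle, and it is exactly where one must invoke the fine properties of Gowers' space (that diagonal operators with infinite-dimensional essential range meet every block subspace). I would handle it by a gliding-hump argument: if $D_S T D_R$ is invertible then its inverse composed with $D_R$ gives a bounded-below map into $X$ whose image, being spanned by a subsequence of the basis, must contain a normalized block sequence on which $T$ acts strictly singularly, contradicting bounded-belowness.

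Finally I would record the bookkeeping: verify $(b_n)$ is genuinely unconditional in Gowers' space (this is a known feature of the unconditional variant of his construction), verify $T$ as defined is bounded with large diagonal, and assemble the above into the statement of Theorem~\ref{operatorlargediagonalnotfactorid}. The only genuinely hard input is the structure theorem for operators on $X$; everything downstream is ideal-theoretic manipulation plus one gliding-hump contradiction.
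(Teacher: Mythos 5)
You have correctly identified the space (Gowers' unconditionally-based space $X_{\mathrm{G}}$) and the key structural input (every operator is diagonal plus strictly singular), which is exactly what the paper uses. But both your choice of $T$ and your endgame have genuine gaps. First, the operator you want to build cannot exist: you ask for $T$ with large diagonal whose restriction to some infinite-dimensional (block) subspace is strictly singular, or which has ``large kernel-like behaviour on half the coordinates.'' On a space with an unconditional basis, large diagonal means the diagonal part $D_T$ in the decomposition $T=D_T+V_T$ has entries bounded below (and above), hence $D_T$ is invertible; therefore $T$ is a strictly singular perturbation of an invertible operator, hence Fredholm of index~$0$, hence bounded below on a subspace of finite codimension. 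So $T$ meets every infinite-dimensional subspace in an infinite-dimensional subspace on which it is an isomorphism, and there is no ``bad half'' for a gliding hump. The only defect an operator with large diagonal can have is a nontrivial \emph{finite-dimensional} kernel, and that is precisely what the paper exploits: it takes $T=I_{X_{\mathrm{G}}}+b_1\otimes b_2^*+b_2\otimes b_1^*$, a rank-two perturbation of the identity with $\langle Tb_n,b_n^*\rangle=1$ for all $n$ and $T(b_1-b_2)=0$.

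Second, your contradiction does not close. Writing $I_X=STR=D_SD_TD_R+(\text{strictly singular})$ and ``comparing diagonals'' only tells you that the entries of $D_SD_TD_R$ tend to~$1$; for the paper's $T$ one has $D_T=I$, so this is perfectly consistent and yields nothing. The obstruction lives in the Fredholm index, not in the diagonal. The paper's route is: (i) a \emph{diagonal} upper semi-Fredholm operator on a space with a basis has $\beta=\alpha$, hence index~$0$ (because its kernel is spanned by finitely many basis vectors and its range is the closed span of the rest); (ii) by stability of the index under strictly singular perturbation, \emph{every} upper semi-Fredholm operator on $X_{\mathrm{G}}$ has index~$0$; (iii) if $I_X=STR$, then $R$ is injective with complemented (hence closed) range, so $R$ is upper semi-Fredholm with $\alpha(R)=0$, hence by (ii) invertible, whence $ST=R^{-1}$ is invertible --- contradicting that $T$, and therefore $ST$, is not injective. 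You would need to supply step (i) (or an equivalent index argument) to finish; the ideal-theoretic expansion of $STR$ alone does not suffice.
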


The proof of Theorem~\ref{operatorlargediagonalnotfactorid} relies on two ingredients.  The first of
these is Fredholm theory, which we shall now recall the relevant parts of.

Given an operator~$T$ on a Banach space~$X$, we set
\begin{equation*}
  \alpha(T) = \dim\ker T\in\mathbb N_0\cup\{\infty\}
  \qquad\text{and}\qquad
  \beta(T) = \dim(X/T(X))\in\mathbb N_0\cup\{\infty\},
\end{equation*}
and we say that:
\begin{itemize}
\item $T$ is an \emph{upper semi-Fredholm operator} if $\alpha(T)<\infty$ and $T$ has closed range;
\item $T$ is a \emph{Fredholm operator} if $\alpha(T)<\infty$ and $\beta(T)<\infty$.
\end{itemize}
Note that the condition $\beta(T)<\infty$ implies that~$T$ has closed range (see,
\emph{e.g.,}~\cite[Corollary~3.2.5]{caradus_pfaffenberger_yood:1974}), so that each Fredholm
operator is automatically upper semi-Fred\-holm. For an upper semi-Fredholm operator~$T$, we define
its \emph{index} by
\begin{equation*}
  i(T) = \alpha(T) - \beta(T)\in\mathbb Z\cup\{-\infty\}.
\end{equation*}

The main property of the class of upper semi-Fredholm operators that we shall require is that it is
stable under strictly singular perturbations in the following precise sense. Let $T$ be an upper
semi-Fredholm operator on a Banach space~$X$, and suppose that $S$ is an operator on~$X$ which is
\emph{strictly singular} in the sense that, for each $\varepsilon >0$, every infinite-dimensional
subspace of~$X$ contains a unit vector~$x$ such that $\|Sx\|\leqslant\varepsilon$. Then $T+S$ is an
upper semi-Fredholm operator, and
\begin{equation*}
  i(T+S) = i(T).
\end{equation*} A proof of this result can be found in~\cite[Proposition~2.c.10]{lindenstrauss-tzafriri:1977}.

We shall require the following piece of notation in the proof of our next lemma. For an element~$x$
of a Banach space~$X$ and a functional $f\in X^*$, we write $x\otimes f$ for the rank-one operator
on~$X$ defined by
\begin{equation*}
  (x\otimes f)y = \langle y,f\rangle x\qquad (y\in X).
\end{equation*}

\begin{lem}\label{diagsemifredholmindex0}
  Let $T$ be a diagonal upper semi-Fredholm operator on a Banach space with a basis. Then
  $\beta(T) = \alpha(T)$, so that $T$ is a Fredholm operator with index~$0$.
\end{lem}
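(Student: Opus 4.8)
The plan is to diagonalise $T$ against the basis and then reduce everything to the finitely many indices at which the diagonal of $T$ vanishes. Writing $\lambda_n = \langle Tb_n,b_n^*\rangle$, diagonality of $T$ together with the basis expansion $Tb_n = \sum_m \langle Tb_n,b_m^*\rangle b_m$ gives $Tb_n = \lambda_n b_n$ for every $n$, and boundedness of $T$ forces $(\lambda_n)_n$ to be a bounded scalar sequence. For $x = \sum_n a_n b_n$ one then has $Tx = \sum_n a_n \lambda_n b_n$, so by uniqueness of the basis representation $x \in \ker T$ precisely when $a_n = 0$ for every $n$ with $\lambda_n \neq 0$. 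Setting $F = \{n \in \mathbb N : \lambda_n = 0\}$, this shows $\ker T = \spn\{b_n : n \in F\}$; since $\alpha(T) < \infty$ the set $F$ is finite and $\card F = \alpha(T)$.

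Next I would split $X$ according to $F$. Let $P = \sum_{n \in F} b_n \otimes b_n^*$; since the coordinate functionals of a Schauder basis are bounded, $P$ is a bounded finite-rank idempotent with $\image P = \spn\{b_n : n \in F\}$, and $Q = I_X - P$ is the bounded projection onto $X_1 := \overline{\spn\{b_n : n \notin F\}}$ (the partial sums of the basis expansion of $Qx$ already lie in $\spn\{b_n : n\notin F\}$). Thus $X = \image P \oplus X_1$ with $\dim \image P = \card F$. Because $Tb_n = 0$ for $n \in F$, I get $T(X) = T(X_1)$ and $T(X_1) \subseteq X_1$; and for each $n \notin F$, $b_n = T(\lambda_n^{-1} b_n) \in T(X_1)$, so $\spn\{b_n : n\notin F\} \subseteq T(X) \subseteq X_1$.

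The one place the full hypothesis is used — and the step I would flag as the crux — is closedness of the range: as $T$ is upper semi-Fredholm, $T(X) = T(X_1)$ is closed, hence it is a closed subspace of $X_1$ containing the dense subspace $\spn\{b_n : n\notin F\}$, and therefore $T(X) = X_1$. Consequently $X/T(X) = X/X_1$ is isomorphic to $\image P$, which gives $\beta(T) = \card F = \alpha(T) < \infty$; so $T$ is a Fredholm operator with $i(T) = \alpha(T) - \beta(T) = 0$. I do not expect any real difficulty here: the only routine points needing care are the boundedness of $P$ and the identification $\image Q = X_1$, both immediate from the defining properties of a Schauder basis, together with the passage from a dense range to the full range $X_1$ just described.
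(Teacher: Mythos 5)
Your proof is correct and follows essentially the same route as the paper: identify $\ker T$ with the span of the finitely many basis vectors killed by $T$, use the coordinate projection $P=\sum_{n\in F}b_n\otimes b_n^*$, and invoke closedness of the range to conclude that $T(X)=\ker P$, whence $\beta(T)=\alpha(T)$. The extra details you supply (boundedness of $P$, density of $\spn\{b_n:n\notin F\}$ in $\ker P$) are exactly the points the paper leaves implicit.
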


\begin{proof} Let $X$ be the Banach space on which $T$ acts, and let $(b_n)_{n\in\mathbb N}$ be the
  basis for~$X$ with respect to which~$T$ is diagonal.  Set $N = \{n\in\mathbb N : Tb_n =
  0\}$. Since $T$ is diagonal, we have $\ker T = \overline{\operatorname{span}}\{ b_n : n\in N\}$,
  and so the set~$N$ is finite, with~$\alpha(T)$ elements. Consequently, we can define a projection
  of~$X$ onto $\ker T$ by \mbox{$P_N = \sum_{n\in N}b_n\otimes b_n^*$}.  The fact that
  $\ker P_N = \overline{\operatorname{span}}\{ b_n : n\in\mathbb N\setminus N\}$ implies that
  $T(X)\subseteq\ker P_N$.  Conversely, for each $n\in\mathbb N\setminus N$, we have
  $b_n = T(\langle Tb_n, b_n^*\rangle^{-1} b_n)$, so we conclude that $\ker P_N\subseteq T(X)$
  because $T$ has closed range.  Hence
  \begin{equation*}
    \beta(T) = \dim P_N(X) = \alpha(T)<\infty,
  \end{equation*} and the result follows.
\end{proof}

The other main ingredient in the proof of Theorem~\ref{operatorlargediagonalnotfactorid} is the
Banach space~$X_{\text{G}}$ which Gowers~\cite{gowers:1994} created to solve Banach's hyperplane
problem. This Banach space has sub\-sequently been investigated in more detail by Gowers and
Maurey~\cite[Sec\-tion~(5.1)]{gowers_maurey:1997}.  Its main properties are as follows.

\begin{thm}[Gowers~\cite{gowers:1994}; Gowers and
  Maurey~\cite{gowers_maurey:1997}]\label{hyperplanecounterex} There
  is a Banach space~$X_{\normalfont{\text{G}}}$ with an unconditional basis such that each operator
  on~$X_{\normalfont{\text{G}}}$ is the sum of a diagonal operator and a strictly singular operator.
\end{thm}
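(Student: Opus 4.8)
The plan is to build $X_{\text{G}}$ by the Gowers--Maurey method, in three parts: (a) write down a Schlumprecht-type norm with a coded family of ``special'' functionals, arranged so that the unit vector basis stays $1$-unconditional; (b) develop rapidly increasing sequences and prove that their partial sums are small; (c) deduce that every bounded operator agrees with its diagonal modulo a strictly singular error. For (a), fix a slowly growing $f$, say $f(n)=\log_2(n+1)$, a very lacunary set $J=\{j_1<j_2<\cdots\}$ of admissible ``lengths'', and an injective coding map $\sigma$ from finite successive sequences of rational norm-one functionals into $J$. One then defines $\|\cdot\|$ on $c_{00}(\mathbb{N})$ implicitly as the smallest $1$-unconditional norm with
\[
  \|x\| = \|x\|_\infty \vee \sup\Bigl\{\tfrac1{f(k)}\sum_{i=1}^k\|E_ix\| : k\ge 2,\ E_1<\cdots<E_k\Bigr\} \vee \sup\bigl\{|\varphi(x)| : \varphi\in\mathcal{A}\bigr\},
\]
where $\mathcal{A}$ consists of the special functionals $\tfrac1{f(k)}(x_1^*+\cdots+x_k^*)$ with $x_1^*<\cdots<x_k^*$ successive norm-one functionals, $k\in J$, and the length of each $x_{i+1}^*$ dictated by $\sigma(x_1^*,\dots,x_i^*)$; one keeps $\mathcal{A}$ closed under coordinate sign changes, which is exactly what makes the basis unconditional rather than yielding the hereditarily indecomposable Gowers--Maurey space. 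Take $X_{\text{G}}$ to be the completion.

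For part (b), I would set up the combinatorial engine. Call a normalized block sequence a rapidly increasing sequence (RIS) if its terms are long $\ell_1$-averages whose complexities grow extremely quickly relative to their supports; then one shows that every block subspace of $X_{\text{G}}$ contains, for each $n$, a normalized block vector behaving like the mean of $n$ unit vectors, so that $X_{\text{G}}$ contains no copy of $c_0$ or $\ell_1$ and is reflexive. The crucial estimate is that for any RIS $(y_k)$ one has $\|\sum_{k=1}^n y_k\|\le C n/f(n)$ up to lower-order terms; this is proved by splitting according to which clause of the implicit norm equation attains the supremum, using that $f$ grows slowly and $J$ is so lacunary that a special functional of a given length can interact nontrivially with at most one term of a RIS at the matching scale.

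For part (c), take a bounded operator $T$ on $X_{\text{G}}$, let $D$ be its diagonal part, and suppose towards a contradiction that $T-D$ is not strictly singular, so $\|(T-D)y\|\ge\varepsilon$ for every normalized $y$ in some block subspace $Y$. Extract a RIS $(y_k)$ in $Y$; since $T-D$ annihilates diagonal entries, the images $z_k=(T-D)y_k$ can, after passing to a subsequence and slightly modifying the $y_k$, be put in general position with respect to $(y_k)$ while $(y_k)$ remains a RIS. Then $\|(T-D)\sum_{k\le n}y_k\|\le\|T-D\|\cdot C n/f(n)$ by the RIS estimate, whereas, exploiting the richness of the basis and the coding $\sigma$, one can choose norm-one functionals $x_k^*$ with $x_k^*(z_k)\ge\varepsilon'$ and admissible lengths so that $\varphi=\tfrac1{f(n)}\sum_{k\le n}x_k^*\in\mathcal{A}$, giving $\varphi\bigl(\sum_{k\le n}z_k\bigr)\ge\varepsilon' n/f(n)$. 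For $n$ large these two bounds clash, so $T-D$ is strictly singular and $T=D+(T-D)$ is the required decomposition.

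The main obstacle will be part (c), and within it the tension between two opposing demands on the norm: it must be large enough that for \emph{any} sufficiently spread-out normalized block sequence of functionals one can assemble a legal special functional obeying $\sigma$, yet small enough that RIS partial sums obey the $n/f(n)$ bound. Calibrating $f$, the lacunary set $J$, and the RIS parameters so that both hold at once — and carrying out the ``general position'' extractions cleanly — is the real work; by comparison, writing down the norm and deducing the stated decomposition once operators are known to be essentially diagonal are routine within the Gowers--Maurey framework.
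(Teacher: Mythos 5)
The paper does not prove this statement at all: it is imported wholesale from Gowers~\cite{gowers:1994} and Gowers--Maurey~\cite{gowers_maurey:1997}, so there is no in-paper argument to compare yours against. Your outline does follow the genuine route of those papers (Schlumprecht-type implicit norm, coded special functionals closed under sign changes to retain unconditionality, rapidly increasing sequences with an $n/f(n)$ upper bound, and a contradiction argument showing $T-D$ is strictly singular), and at the level of a plan it identifies the right components and correctly locates where the difficulty sits.

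There is, however, a concrete error that breaks your part (c) as written. You normalize the special functionals by $1/f(k)$, i.e.\ $\varphi=\tfrac1{f(k)}(x_1^*+\cdots+x_k^*)$. With that weight the special clause is dominated by the generic Schlumprecht clause $\sup_k \tfrac1{f(k)}\sum\|E_ix\|$, so the set $\mathcal A$ adds nothing to the norm and, more to the point, your final comparison yields $\varepsilon' n/f(n)\le \|T-D\|\,Cn/f(n)$ --- both sides are of the same order and there is no contradiction for large $n$. In the actual construction the special functionals carry the weight $1/\sqrt{f(k)}$, which is what makes them genuinely ``special'': the lower bound becomes $\varepsilon' n/\sqrt{f(n)}$ against the RIS upper bound $Cn/f(n)$, and these clash because $\sqrt{f(n)}=o(f(n))$. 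Beyond this, two steps you wave at are the real content and cannot be done after the fact: the functionals $x_k^*$ must form a \emph{legal} special sequence (the admissible length of $x_{k+1}^*$ is dictated by $\sigma(x_1^*,\dots,x_k^*)$), which forces the $y_k$, $z_k=(T-D)y_k$ and $x_k^*$ to be constructed by an interleaved recursion rather than by ``extracting a RIS and then choosing functionals''; and the ``general position'' reduction --- arranging that $z_k$ is essentially disjointly supported from $y_k$ once the diagonal is subtracted --- requires its own lemma. Given that the statement is an entire theorem of Gowers plus a section of Gowers--Maurey, your text is a correct roadmap with one fatal miscalibration, not a proof.
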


\begin{cor}\label{semiFredholmindex0}
  Each upper semi-Fredholm operator on the Banach space~$X_{\normalfont{\text{G}}}$ is a Fredholm
  operator of index~$0$.
\end{cor}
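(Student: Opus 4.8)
The plan is to combine Theorem~\ref{hyperplanecounterex} with Lemma~\ref{diagsemifredholmindex0} together with the stability of upper semi-Fredholm operators under strictly singular perturbations. Let $T$ be an upper semi-Fredholm operator on $X_{\text{G}}$. By Theorem~\ref{hyperplanecounterex} we may write $T = D + S$, where $D$ is diagonal (with respect to the unconditional basis of $X_{\text{G}}$) and $S$ is strictly singular. Since strictly singular operators form a two-sided ideal, $-S$ is also strictly singular, and hence the stability result quoted above (from \cite[Proposition~2.c.10]{lindenstrauss-tzafriri:1977}) applied to the upper semi-Fredholm operator $T$ and the strictly singular perturbation $-S$ shows that $D = T + (-S)$ is again an upper semi-Fredholm operator, with $i(D) = i(T)$.

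Now $D$ is a \emph{diagonal} upper semi-Fredholm operator on a Banach space with a basis, so Lemma~\ref{diagsemifredholmindex0} applies: $D$ is a Fredholm operator with $i(D) = 0$. Therefore $i(T) = i(D) = 0$. To upgrade ``upper semi-Fredholm of index $0$'' to ``Fredholm'', I would again invoke the stability result, this time to pass back from $D$ to $T$: since $T = D + S$ with $D$ Fredholm (in particular upper semi-Fredholm) and $S$ strictly singular, $T$ is upper semi-Fredholm with $\alpha(T) < \infty$ and $i(T) = i(D) = 0$, whence $\beta(T) = \alpha(T) - i(T) = \alpha(T) < \infty$. Thus $T$ is a Fredholm operator of index~$0$, as claimed.

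There is really no serious obstacle here; the corollary is a direct splicing-together of the two ingredients the authors have just assembled. The only point requiring a little care is the bookkeeping with the perturbation result: one must apply it in the form ``upper semi-Fredholm plus strictly singular is upper semi-Fredholm with the same index'' in \emph{both} directions (from $T$ to $D$ to read off the index, and implicitly from $D$ to $T$ to conclude finiteness of $\beta(T)$), and one should note explicitly that the class of strictly singular operators is closed under negation so that $-S$ qualifies. Once that is observed, the chain of equalities $i(T) = i(D) = 0$ together with $\alpha(T) < \infty$ forces $\beta(T) < \infty$, completing the proof.
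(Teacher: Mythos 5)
Your proof is correct and follows essentially the same route as the paper: decompose $T=D+S$ via Theorem~\ref{hyperplanecounterex}, use the strictly singular perturbation result to transfer the index between $T$ and $D$, and apply Lemma~\ref{diagsemifredholmindex0}. Your extra bookkeeping (noting $-S$ is strictly singular and deducing $\beta(T)=\alpha(T)<\infty$ from $i(T)=0$) only makes explicit what the paper leaves implicit.
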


\begin{proof}
  Let $T$ be an upper semi-Fredholm operator on~$X_{\normalfont{\text{G}}}$. By
  Theorem~\ref{hyperplanecounterex}, we can find a diagonal operator~$D$ and a strictly singular
  operator~$S$ on~$X_{\normalfont{\text{G}}}$ such that $T = D+S$.  The stability of the class of
  upper semi-Fredholm operators under strictly singular perturbations that we stated above implies
  that~$D$ is an upper semi-Fredholm operator with the same index as~$T$, and hence the conclusion
  follows from Lemma~\ref{diagsemifredholmindex0}.
\end{proof}

\begin{proof}[Proof of Theorem~{\normalfont{\ref{operatorlargediagonalnotfactorid}}}.]  Let
  $X=X_{\normalfont{\text{G}}}$ be the Banach space from Theorem~\ref{hyperplanecounterex}, and let
  $(b_n)_{n\in\mathbb N}$ be the unconditional basis for~$X_{\normalfont{\text{G}}}$ with respect to
  which each operator on~$X_{\normalfont{\text{G}}}$ is the sum of a diagonal operator and a
  strictly singular operator.  We may suppose that $(b_n)_{n\in\mathbb N}$ is normalized.  Set
  \begin{equation*}
    T = I_{X_{\normalfont{\text{G}}}} + b_1\otimes b_2^* + b_2\otimes b_1^*.
  \end{equation*} Then $T$ has large
  diagonal because $\langle Tb_n,b_n^*\rangle =1$ for each $n\in\mathbb N$.

  Assume towards a contradiction that $I_{X_{\normalfont{\text{G}}}} = STR$ for some operators~$R$
  and~$S$ on~$X_{\normalfont{\text{G}}}$. Then~$R$ is injective, and its range is complemented
  (because $RST$ is a projection onto it), and it is thus closed, so that $R$ is an upper
  semi-Fredholm operator with $\alpha(R) = 0$. This implies that~$R$ is a Fredholm operator of
  index~$0$ by Corollary~\ref{semiFredholmindex0}, and hence $R$ is invertible.  Since $ST$ is a
  left inverse of~$R$, the uniqueness of the inverse shows that $R^{-1} = ST$, but this contradicts
  that the operator~$T$ is not injective (because \mbox{$T(b_1 - b_2) = 0$}).
\end{proof}

As we have seen in the proof of Theorem~\ref{operatorlargediagonalnotfactorid}, the identity
operator need not factor through a Fredholm operator. If, however, we allow ourselves sums of two
operators, then we can always factor the identity operator, as the following result shows.
\begin{pro}
  Let $T$ be a Fredholm operator on an infinite-dimensional Banach space $X$. Then there are
  operators $R_1$, $R_2$, $S_1$, and $S_2$ on~$X$ such that
  \begin{equation*}
    I_X = S_1TR_1 + S_2TR_2.
  \end{equation*}
\end{pro}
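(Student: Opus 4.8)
The plan is to write $I_X$ as a sum of two operators: one that factors through~$T$ because~$T$ is invertible on a complement of its kernel, and a finite-rank operator that we push through~$T$ by hand, using that~$X$ is infinite-dimensional.

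\textbf{Step 1: complements and the invertible part.} Since $\alpha(T)<\infty$, the subspace $\ker T$ is finite-dimensional and hence complemented in~$X$; fix a closed subspace~$X_0$ with $X=\ker T\oplus X_0$, and let~$Q$ be the bounded projection of~$X$ onto~$\ker T$ along~$X_0$. Since $\beta(T)<\infty$ and~$T$ has closed range, $T(X)$ is a closed finite-codimensional subspace, hence also complemented; fix a bounded projection~$P$ of~$X$ onto~$T(X)$. Because $X=\ker T + X_0$ we have $T(X)=T(X_0)$, so the restriction $T_0=T|_{X_0}\colon X_0\to T(X)$ is a bijection, and by the open mapping theorem its inverse, viewed as an operator $U\colon T(X)\to X$, is bounded. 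I would then set $S_1=UP$ and $R_1=I_X$; a one-line check on the decomposition $x=Qx+(x-Qx)$, using $Tx=T_0(x-Qx)$ and $P T_0(x-Qx) = T_0(x-Qx)$, shows $S_1 T = I_X - Q$, i.e.
\[
  S_1 T R_1 = I_X - Q .
\]

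\textbf{Step 2: factoring the finite-rank projection $Q$ through~$T$.} It remains to find $S_2,R_2$ with $S_2 T R_2 = Q$. Write $\alpha=\alpha(T)$, fix a basis $e_1,\dots,e_\alpha$ of~$\ker T$ and coordinate functionals $e_1^*,\dots,e_\alpha^*\in X^*$ vanishing on~$X_0$ with $\langle e_j, e_i^*\rangle=\delta_{ij}$, so that $Q=\sum_{i=1}^\alpha e_i\otimes e_i^*$. Since $\ker T$ is finite-dimensional, $X_0$ is infinite-dimensional, so I can choose linearly independent vectors $f_1,\dots,f_\alpha\in X_0$; as $T_0$ is injective, the vectors $Tf_1,\dots,Tf_\alpha$ are linearly independent, and I pick $g_1,\dots,g_\alpha\in X^*$ with $\langle Tf_j, g_i\rangle=\delta_{ij}$. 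Setting $R_2=\sum_{i=1}^\alpha f_i\otimes e_i^*$ and $S_2=\sum_{i=1}^\alpha e_i\otimes g_i$, which are finite-rank, hence bounded, operators on~$X$, a short computation gives $S_2 T R_2=\sum_{i=1}^\alpha e_i\otimes e_i^*=Q$. Adding this to the identity of Step~1 yields $I_X=S_1 T R_1+S_2 T R_2$, as required.

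\textbf{Main obstacle.} The only step carrying genuine content is Step~2, the factorization of the finite-rank projection~$Q$ through~$T$: this is exactly where the hypothesis $\dim X=\infty$ enters, since one needs room inside~$X_0$ for $\alpha(T)$ linearly independent vectors on which~$T$ acts injectively. It is also the reason two summands are needed in general — a single factorization $I_X=S T R$ would force~$R$ to be injective (as in the proof of Theorem~\ref{operatorlargediagonalnotfactorid}), which is incompatible with~$T$ being non-injective on spaces such as~$X_{\text{G}}$, even though on other spaces (for example $\ell^2$ with~$T$ a backward shift) one summand can suffice. Everything outside Step~2 is a routine appeal to the complementability of finite-dimensional and finite-codimensional closed subspaces and to the open mapping theorem.
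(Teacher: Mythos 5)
Your proof is correct and follows essentially the same route as the paper: decompose $I_X$ as $(I_X-Q)+Q$, factor the first summand through~$T$ via the bounded inverse of~$T$ restricted to a complement of its kernel, and push the finite-rank projection onto $\ker T$ through~$T$ using a biorthogonal system supplied by the infinite-dimensionality of the range. The only (inessential) differences are notational --- your $Q$ and $P$ swap roles relative to the paper's, and you take $R_1=I_X$ where the paper takes $R_1=I_X-P$, which changes nothing since $T(I_X-P)=T$.
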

\begin{proof}
  Let $P = \sum_{j=1}^n x_j\otimes f_j$ be a projection of~$X$ onto the kernel of~$T$, where
  $n\in\mathbb N$, $x_1,\ldots, x_n\in X$, and $f_1,\ldots, f_n\in X^*$, and let $Q$ be a projection
  of~$X$ onto the range of~$T$.  Since this range is infinite-dimensional, we can find
  $y_1,\ldots,y_n\in X$ and $g_1,\ldots, g_n\in X^*$ such that
  $\langle Ty_j, g_k\rangle = \delta_{j,k}$ (the Kronecker delta) for each
  \mbox{$j,k\in\{1,\ldots,n\}$}. The restriction
  $\widetilde{T}\colon x\mapsto Tx,\, \ker P\to T(X)$, is invertible, so we may define an operator
  on~$X$ by $S_1 = J\widetilde{T}^{-1}Q$, where $J\colon \ker P\to X$ is the inclusion. Set
  \begin{equation*}
    R_1 = I_X - P,\qquad R_2 = \sum_{j=1}^n y_j\otimes f_j,\qquad\text{and}\qquad S_2 =
    \sum_{k=1}^n x_k\otimes g_k.
  \end{equation*} Then, for each $z\in X$, we have
  \begin{align*}
    (S_1TR_1 + S_2TR_2)z
    &= J\widetilde{T}^{-1}QT(z - Pz) +
      \sum_{j,k=1}^n \langle Ty_j,g_k\rangle \langle z,f_j\rangle x_k\\
    & = (z - Pz) + Pz = z,
  \end{align*}
  from which the conclusion follows.
\end{proof}

\section{The answer to Question~\ref{mainquestion} is positive in mixed-norm Hardy
  spaces}\label{sec:hardy}

\noindent
In many classical Banach spaces, the answer to Question~\ref{mainquestion} is known to be positive.
This includes $\ell^p$, $p\geq 1$, and $L^p$, $p>1$, see Pe{\l}czy{\'n}ski~\cite{pelczynski:1960}
and Andrew~\cite{andrew:1979}, respectively.  Closely related to this question is the work of
Johnson, Maurey, Schechtman and Tzafriri~\cite[Chapter~9]{jmst:1979}, in which they specify a
criterion for an operator on a rearrangement invariant function space to be a factor of the
identity.

We now turn to defining the mixed-norm Hardy spaces together with an unconditional basis, the
bi-parameter Haar system.  Let $\dint$ denote the collection of dyadic intervals given by
\begin{equation*}
  \dint
  = \{[k2^{-n},(k+1)2^{-n})\, :\, n,k\in \mathbb N_0, 0\leq k \leq 2^n-1\}.
\end{equation*}
The dyadic intervals are nested, i.e. if $I,J\in \dint$, then $I\cap J \in \{I,J,\emptyset\}$. For
$I\in \dint$ we let $|I|$ denote the length of the dyadic interval $I$. Let $I\in \dint$ and
$I\neq [0,1)$, then $\widetilde I$ is the unique dyadic interval satisfying $\widetilde I\supset I$
and $|\widetilde I| = 2 |I|$. Given $N_0\in \mathbb N_0$ we define
\begin{equation*}
  \dint_{N_0} = \{I\in \dint\, :\, |I| = 2^{-N_0}\}
  \qquad\text{and}\qquad
  \dint^{N_0} = \{I\in \dint\, :\, |I| \geq 2^{-N_0}\}.
\end{equation*}
Let~$h_I$ be the $L^\infty$-normalized Haar function supported on $I\in\dint$; that is, for
$I = [a,b)$ and $c = (a+b)/2$, we have $h_I(x) = 1$ if $a\le x <c$, $h_I(x) = -1$ if $c\leq x < b$,
and $h_I(x) = 0$ otherwise. Moreover, let $\drec = \{I\times J: I,J\in\dint\}$ be the collection of
dyadic rectangles contained in the unit square, and set
\begin{equation*}
  h_{I\times J}(x,y) = h_I(x)h_J(y),\qquad (I\times J\in\drec,\, x,y\in[0,1)).
\end{equation*}

For $1\leq p,q < \infty$, the \emph{mixed-norm Hardy space} $H^p(H^q)$ is the completion of
\begin{equation*}
  \spn\{ h_{I\times J}\, :\, I\times J \in \drec \}
\end{equation*}
under the square function norm
\begin{equation}\label{HpHqnorm}
  \|f\|_{H^p(H^q)}
  = \bigg(
  \int_0^1 \Big(
  \int_0^1 \big(
  \sum_{I\times J} |a_{I\times J}|^2 h_{I\times J}^2(x,y)
  \big)^{q/2}
  \dif y
  \Big)^{p/q}
  \dif x
  \bigg)^{1/p}
  ,
\end{equation}
where $f = \sum_{I\times J} a_{I\times J} h_{I\times J}$.  Then
$(h_{I\times J})_{I\times J\in\drec}$ is a $1$-unconditional basis of~$H^p(H^q)$, called the
\emph{bi-parameter Haar system}.  We begin with the following facts:
\begin{itemize}
\item It is recorded by Capon~\cite{capon:1982} that the identity operator provides an isomorphism
  between $H^p(H^q)$ and $L^p(L^q)$, $1 < p,q < \infty$.

\item Since the bi-parameter Haar system $\{h_{I\times J} : I\times J\in\drec\}$ is an unconditional
  basis, we do not need to specify an ordering of its index set~$\drec$.

\item This basis is $L^\infty$-normalized and not normalized in $H^p(H^q)$; we have
  \begin{equation*}
    \|h_{I\times J}\|_{H^p(H^q)} = |I|^{1/p}|J|^{1/q}.
  \end{equation*}
\end{itemize}
An operator $T\, :\, H^p(H^q)\to H^p(H^q)$ has large diagonal with respect to the
$L^\infty$-normalized Haar system $\{h_{I\times J} : I\times J\in\drec\}$ if and only if for some
$\delta > 0$ we have that $|\langle T h_{I\times J}, h_{I\times J} \rangle| \geq \delta |I\times J|$
for all $I\times J \in \drec$.  The remaining sections of the paper are devoted to proving the
following theorem.

\begin{thm}\label{thm:2d-andrew}
  Let $1 \leq p,q < \infty$ and $\delta > 0$, and let $T : H^p(H^q)\to H^p(H^q)$ be an operator
  satisfying
  \begin{equation*}
    |\langle T h_{I\times J}, h_{I\times J} \rangle| \geq \delta |I\times J|
    \qquad\text{for all $I\times J\in \drec$}.
  \end{equation*}
  Then the identity operator on $H^p(H^q)$ factors through~$T$, that is, there are opera\-tors~$R$
  and~$S$ such that the diagram
  \begin{equation}\label{eq:2d-andrew}
    \vcxymatrix{H^p(H^q) \ar[r]^{I_{H^p(H^q)}} \ar[d]_R & H^p(H^q)\\
      H^p(H^q) \ar[r]_T & H^p(H^q) \ar[u]_S}
  \end{equation}
  is commutative. Moreover, for any $\eta\in(0,1]$ the operators $R$ and $S$ can be chosen such that
  $\|R\|\|S\| \leq (1+\eta)/\delta$.
\end{thm}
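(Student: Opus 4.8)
The plan is to adapt Andrew's one-parameter strategy to the bi-parameter setting, following the classical Gamlen--Gaudet scheme: realize $H^p(H^q)$ as the closed span of a system of Haar-like functions built by a recursive construction inside $H^p(H^q)$ itself, and arrange that $T$ carries this system onto a good basis for a complemented copy of $H^p(H^q)$. Concretely, I would build for each dyadic rectangle $I\times J$ a \emph{block} $\tilde h_{I\times J}$, a finite linear combination of Haar functions $h_{K\times L}$, such that: (i) the map $h_{I\times J}\mapsto \tilde h_{I\times J}$ extends to an isomorphic embedding of $H^p(H^q)$ with constants depending only on $p,q$; (ii) the blocks are chosen with supports and frequencies refined enough that $T\tilde h_{I\times J}$ is, up to a small perturbation, a scalar multiple $c_{I\times J}\tilde h_{I\times J}$ of the block itself, where $|c_{I\times J}|\ge\delta/2$ by the large-diagonal hypothesis; and (iii) the span of the blocks is complemented in $H^p(H^q)$ by an averaging/conditional-expectation projection $Q$. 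Granting this, one defines $R$ to be the embedding $h_{I\times J}\mapsto\tilde h_{I\times J}$ composed with the inverse of the diagonal operator $D=\operatorname{diag}(c_{I\times J})$ on the block space, and $S$ to be $Q$ followed by the inverse embedding; then $STR$ differs from $I$ by a small operator that can be absorbed by a Neumann series, and $\|R\|\,\|S\|\lesssim_{p,q}1/\delta$.

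The construction of the blocks is recursive and is where the bi-parameter combinatorics enters. Enumerate the rectangles by the partial order of refinement. At stage $n$ one has chosen, for each rectangle $R_0$ already treated, a finite union $\mathcal E(R_0)$ of dyadic rectangles (the "support set") on which $\tilde h_{R_0}$ lives as a $\pm1$ combination of Haar functions with the correct one-dimensional and two-dimensional cancellation, so that the $\tilde h_{R_0}$ span a space isometric to a finite-dimensional piece of $H^p(H^q)$; the point is that the $H^p(H^q)$-norm of a Haar expansion is insensitive to replacing each $h_{I\times J}$ by a properly balanced sum of $h_{K\times L}$ over many small sub-rectangles of $I\times J$, because the square function and the iterated $L^p(L^q)$ integral are unchanged. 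To pass to the next rectangle $R_1$ we need $\tilde h_{R_1}$ supported on a portion of the plane that is "new" relative to the frequencies appearing in $T\tilde h_{R_0}$ for all earlier $R_0$; since each $T\tilde h_{R_0}$ has a rapidly convergent Haar expansion, only finitely many frequencies carry most of its mass, and we choose the generation of the sub-rectangles defining $\tilde h_{R_1}$ high enough to avoid them up to error $\varepsilon_n$, with $\sum_n\varepsilon_n$ small. This is exactly the mechanism by which $T$ becomes approximately diagonal on the block system; carrying it out requires the two-parameter analogue of Andrew's lemma on small perturbations of the Haar system, and keeping the perturbation summable uniformly in $p$ and $q$.

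The main obstacle, and the step I expect to consume most of the work, is controlling the complementation constant of the block span and the perturbation estimate \emph{simultaneously in both parameters}, including the limiting cases $p=1$ or $q=1$ where the dual is a $\mathrm{BMO}$-type space rather than an $H^{p'}(H^{q'})$. For $1<p,q<\infty$ the projection $Q$ onto the block span is constructed by duality (it is the adjoint of the analogous embedding on $H^{p'}(H^{q'})$), but when $p=1$ or $q=1$ one must instead exhibit $Q$ directly as a martingale-type averaging operator adapted to the support sets $\mathcal E(R)$ and verify its boundedness on $H^1(H^q)$, $H^p(H^1)$, $H^1(H^1)$ by hand, using the unconditionality of the bi-parameter Haar system and the square-function description of the norm. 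A secondary difficulty is bookkeeping: the refinement order on $\drec$ is not a single sequence in an obvious way, so one must fix a well-order compatible with refinement in each coordinate and check that at every stage only finitely many prior constraints are active, so that a valid choice of the next block always exists. Once these are in place, the factorization diagram~\eqref{eq:2d-andrew} and the bound $\|R\|\,\|S\|\le C/\delta$ follow formally.
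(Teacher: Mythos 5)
Your overall architecture matches the paper's: order the rectangles, build a block basis of $\pm1$ sums of Haar functions on which $T$ is almost diagonal with diagonal entries bounded below by $\approx\delta$, show the block basis spans a complemented copy of $H^p(H^q)$ equivalent to the Haar system, and finish with a Neumann series. You also correctly single out the boundedness of the projection in the limiting cases $p=1$ or $q=1$ as the hard analytic step (this is the content of the paper's Theorem~\ref{thm:projection}). However, there are two concrete gaps. First, your justification for step (i) --- that the $H^p(H^q)$-norm is ``insensitive to replacing each $h_{I\times J}$ by a properly balanced sum of $h_{K\times L}$ over many small sub-rectangles \dots because the square function and the iterated $L^p(L^q)$ integral are unchanged'' --- is false in the bi-parameter setting. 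The square function of $\sum a_{I\times J}b_{I\times J}$ is supported on the union sets $B_{I\times J}$ rather than on $I\times J$, and the iterated $L^p(L^q)$ integral is comparable to the original one only if, for a.e.\ $x$, the slices of $B_{I\times J}$ in the $y$-direction have the correct measure and nesting. This is exactly Capon's local product condition: the collections must have the form $\mathscr B_{I\times J}=\{K\times L : K\in\mathscr X_I,\ L\in\mathscr Y_J(K)\}$ with (X\ref{enu:x1})--(X\ref{enu:x5}) and (Y\ref{enu:y1})--(Y\ref{enu:y5}); a cover of the same subset by fragmented rectangles (cf.\ Figure~\ref{fig:building_blocks-4}) yields a block basis that is \emph{not} equivalent to the Haar system. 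Your recursion, which simply picks sub-rectangles of sufficiently high generation inside the support sets, does not guarantee this structure; the construction has to be organized coordinate-wise (a Gamlen--Gaudet step in $x$ when $I$ is refined, and a Gamlen--Gaudet step in $y$ \emph{relative to each fixed $K\in\mathscr X_I$} when $J$ is refined). Both the equivalence constant and the complementation constant --- including the $p=1$, $q=1$ cases you flag --- hinge on this structure, so without it the main obstacle you identify cannot even be formulated, let alone overcome.

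Second, your mechanism for almost-diagonalization only controls the interaction of a new block with the images $T\tilde h_{R_0}$ and $T^*\tilde h_{R_0}$ of \emph{previously} constructed blocks (this part is fine: it follows from weak-null convergence of high-frequency blocks). It does not control the off-diagonal interactions \emph{within} a single block: writing $Th_{K\times L}=\alpha_{K\times L}h_{K\times L}+r_{K\times L}$, the diagonal entry of the new block is $\sum_{K\times L}\alpha_{K\times L}|K\times L|$ plus the cross term $\sum_{R_0\neq R_1}\varepsilon_{R_0}\varepsilon_{R_1}\langle h_{R_0},r_{R_1}\rangle$ over pairs in the same generation. Raising the generation does not make this sum small, since the number of pairs grows; with all signs equal to $+1$ the cross term can swamp the lower bound $(\delta-\eta)\|\tilde h\|_2^2$. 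The paper resolves this by choosing the signs $\varepsilon_{K\times L}$ randomly: the cross term has mean zero and second moment $O(|K|^{1/2}|L|^{1/2})$, so Chebyshev's inequality produces a good sign pattern once the generation is large. Some such selection device is indispensable, and your proposal contains none.
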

For related, local (finite dimensional, quantitative) factorization theorems in bi-parameter $H^1$
and $\bmo$, see~\cite{mueller:2005, lechner_mueller:2014}.  Recently in~\cite{lechner:2016:1}, the
second named author obtained local factorization results in mixed-norm Hardy and $\bmo$ spaces by
combining methods of the present paper with techniques of~\cite{lechner_mueller:2014}. Despite the
fact that the constants in our theorem are independent of $p$ and $q$, we remark that the passage to
the non-separable limiting spaces (corresponding to $p=\infty$ or $q=\infty$) cannot be deduced
routinely from the proof given below. The non-separable space $SL^\infty$ consisting of functions
with square function in $L^\infty$ would be an example of such a limiting space. Factorization
theorems in $SL^\infty$ are treated by the second named author in~\cite{lechner:2016:2}.

The cornerstones upon which the constructions of the operators $R,S$ in Theorem~\ref{thm:2d-andrew}
rest are embeddings and projections onto a carefully chosen block basis of the bi-parameter Haar
system in mixed-norm Hardy spaces.

\section{Capon's local product condition and its consequences}\label{sec:projections}

\noindent
In this section, we treat embeddings and projections in $H^p(H^q)$.  They are the main pillars of
the construction underlying the proof of Theorem~\ref{thm:2d-andrew}.  We begin by listing some
elementary and well known facts concerning $H^p(H^q)$ and its dual.

\subsection{Basic facts and notation}\label{subsec:basic-facts}\hfill

\noindent
Let $1\leq p,q < \infty$ and let $H^p(H^q)^*$ denote the dual space of $H^p(H^q)$, identified as a
space of functions on $[0,1)^2$.  Then the duality pairing between $H^p(H^q)$ and $H^p(H^q)^*$ is
given by
\begin{equation*}
  \langle f, g\rangle
  = \int_0^1 \int_0^1 f(x,y)g(x,y)\dif y\dif x.
\end{equation*}
Correspondingly, we have
\begin{equation*}
  \|g\|_{H^p(H^q)^*} = \sup_{\|f\|_{H^p(H^q)}\leq 1} |\langle f, g\rangle|.
\end{equation*}
Since $h_{I\times J}$, $I\times J\in \drec$ is a $1$-unconditional Schauder basis in $H^p(H^q)$, we
may identify an element $g\in H^p(H^q)^*$ with the sequence
$(\langle h_{I\times J}, g\rangle)_{I\times J}$.  In the dual space, the norm of
$(|\langle h_{I\times J}, g\rangle|)_{I\times J}$ is equal to the norm of
$(\langle h_{I\times J}, g\rangle)_{I\times J}$.  See~\cite[Chapter 1]{lindenstrauss-tzafriri:1977}.

If $1 < p,p',q,q' < \infty$ and $\frac{1}{p} + \frac{1}{p'} = 1$, $\frac{1}{q} + \frac{1}{q'} = 1$,
it is recorded by Capon~\cite{capon:1982} that there is a constant $C_{p,q}$ such that for any
finite linear combination $f$ of Haar functions we have
\begin{equation*}
  C_{p,q}^{-1} \|f\|_{L^p(L^q)} \leq \|f\|_{H^p(H^q)} \leq C_{p,q} \|f\|_{L^p(L^q)}.
\end{equation*}
Consequently, the identity operator provides an isomorphism between $H^p(H^q)$ and $L^p(L^q)$, and
the dual of $H^p(H^q)$ identifies with $H^{p'}(H^{q'})$.  Capon's argument is based on the
observation by Pisier that the $\umd_p$ property of a Banach space does not depend on the value of
$1 < p < \infty$. For a proof of Pisier's observation, we refer to~\cite{maurey:1974-1975}
respectively~\cite[Chapter~5]{pisier:2016}.

For the limiting cases we have $H^1(H^q)^* = \bmo(H^{q'})$, $H^p(H^1)^* = H^{p'}(\bmo)$ and
$H^1(H^1)^* = \bmo(\bmo)$. See Maurey~\cite{maurey:1980} and M\"uller~\cite{mueller:1994}.

Let $\{\mathscr B_R : R\in \drec\}$ be a pairwise disjoint family, where each set~$\mathscr B_R$ is
a finite collection of disjoint dyadic rectangles. Given a vector of scalars
$\beta = (\beta_R : R\in \bigcup_{Q\in\drec} \mathscr{B}_Q)$, we define
\begin{equation}\label{eq:block-basis}
  b_R^{(\beta)}(x,y)
  = \sum_{Q\in \mathscr B_R} \beta_Q h_Q(x,y),
  \qquad x,y\in [0,1)
\end{equation}
and we call $\{b_R^{(\beta)} : R\in \drec\}$ the \emph{block basis generated by}
$\{\mathscr B_R : R\in \drec\}$ and $\beta = (\beta_R : R\in \bigcup_{Q\in\drec} \mathscr{B}_Q)$.
Now, let $1\leq p,q < \infty$ be fixed.  Note that $\{b_R^{(\beta)} : R\in \drec\}$ is
$1$-unconditional in $H^p(H^q)$ since $\{h_R : R\in\drec\}$ is $1$-unconditional in $H^p(H^q)$, i.e.
\begin{equation*}
  \Big\| \sum_{R\in\drec} \gamma_R \alpha_R b_R^{(\beta)}\Big\|_{H^p(H^q)}
  \leq \sup_{R\in\drec} |\gamma_R|\ \Big\| \sum_{R\in\drec} \alpha_R b_R^{(\beta)}\Big\|_{H^p(H^q)},
  \qquad (\gamma_R)_{R\in\drec}\in\ell^\infty(\drec),
\end{equation*}
whenever the series $\sum_{R\in\drec} \alpha_R b_R^{(\beta)}$ converges.  We say that the system
$\{b_R^{(\beta)} : R\in \drec\}$ is equivalent to the Haar system $\{h_R : R\in \drec\}$ if the
operator $B_\beta : H^p(H^q)\to H^p(H^q)$ given by
\begin{equation*}
  B_\beta(f)  = \sum_{R\in\drec} \frac{\langle f, h_R\rangle}{\|h_R\|_2^2}\, b_R^{(\beta)},
  \qquad f\in H^p(H^q),
\end{equation*}
is bounded and an isomorphism onto its range.  In this case, whenever
$C_1, C_2>0$ are constants such that  
\begin{equation*}
  \frac{1}{C_1} \| f\|_{H^p(H^q)}
  \leq \| B_\beta f\|_{H^p(H^q)}
  \leq C_2 \| f\|_{H^p(H^q)},
  \qquad f\in H^p(H^q),
\end{equation*}
we say that $\{b_R^{(\beta)} : R \in \drec\}$ is \emph{$C_1C_2$-equivalent} to
$\{h_R : R \in \drec\}$.

If $\beta_R = 1$ for each $R\in\drec$, then we write $b_R$ instead of $b_R^{(\beta)}$ and $B$ in
place of $B_\beta$.

\subsection{Uniform weak and weak* limits}\label{sec:uniformly-weak-weak}\hfill

\noindent
Let $\Gamma$ denote the closed unit ball of~$\ell^\infty(\mathscr{R})$, so that $\Gamma$ consists of
all families $\gamma = (\gamma_R : R\in\mathscr{R})$ of scalars with $|\gamma_R|\leq 1$ for each
$R\in\mathscr{R}$. Given $\gamma \in\Gamma$, the $1$-un\-conditionality of the bi-parameter Haar
system implies that the definition
\begin{equation}\label{eq:multiplier}
  M_\gamma\colon h_R\mapsto \gamma_R h_R,\qquad R\in\mathscr{R} 
\end{equation}
extends uniquely to an operator of norm~$\sup_R |\gamma_R|$ on~$H^p(H^q)$.

\begin{lem}\label{lem:thesequencefm}
 For  $m\in\mathbb N$, let $\mathscr{X}_m$ and $\mathscr{Y}_m$ be
  non-empty, finite families of pairwise disjoint dyadic intervals,
  define $f_m = \sum_{I\in\mathscr{X}_m,\, J\in\mathscr{Y}_m} h_{I\times
    J}$, $X_m = \bigcup\mathscr{X}_m$, and $Y_m =
  \bigcup\mathscr{Y}_m$, and let $1\leq p,q < \infty$.  Then:
  \begin{enumerate}[(i)]
  \item\label{lem:thesequencefm:0} $\|f_m\|_{H^p(H^q)} = |X_m|^{1/p}|Y_m|^{1/q}$ for all
    $m\in \mathbb{N}$;
  \item\label{lem:thesequencefm:1} $\|f_m\|_{H^p(H^q)^*} = |X_m|^{1-1/p}|Y_m|^{1-1/q}$ for all
    $m\in \mathbb{N}$.
  \end{enumerate}
 Suppose in addition that:
  \begin{itemize}
  \item $\mathscr{X}_m\cap\mathscr{X}_n=\emptyset$ or $\mathscr{Y}_m\cap\mathscr{Y}_n=\emptyset$
    whenever $m,n\in\mathbb N$ are distinct;
  \item $X_m = X_n$ and $Y_m = Y_n$ for all $m,n\in\mathbb{N}$.
  \end{itemize}
  Then:
  \begin{enumerate}[(i)]
    \setcounter{enumi}{2}
  \item\label{lem:thesequencefm:2} the sequence $(|X_m|^{-1/p}|Y_m|^{-1/q}f_m)_{m\in\mathbb N}$
    in~$H^p(H^q)$ is isometrically equivalent to the unit vector basis
    of~$\ell^2$;
  \item\label{lem:thesequencefm:3} for each $g\in H^p(H^q)^*$,
    $\sup_{\gamma\in\Gamma}|\langle M_\gamma f_m, g\rangle|\to 0$ as $m\to\infty$;
  \item\label{lem:thesequencefm:4} for each $g\in H^p(H^q)$,
    $\sup_{\gamma\in\Gamma}|\langle M_\gamma g, f_m\rangle|\to 0$ as $m\to\infty$.
  \end{enumerate}
\end{lem}

Note that in~\eqref{lem:thesequencefm:0},~\eqref{lem:thesequencefm:2},
and~\eqref{lem:thesequencefm:3}, we regard $f_m$ as an element of~$H^p(H^q)$, whereas
in~\eqref{lem:thesequencefm:1} and~\eqref{lem:thesequencefm:4}, we regard it as an element
of~$H^p(H^q)^*$.

\begin{proof} Set $\mathscr{B}_m = \{ I\times J : I\in\mathscr{X}_m,\, J\in\mathscr{Y}_m\}$ for each
  $m\in\mathbb N$.

  \eqref{lem:thesequencefm:0}. This follows immediately from the definition of $\|\cdot\|_{H^p(H^q)}$.

  \eqref{lem:thesequencefm:1}. For any
  $g = \sum_{K\times L\in \mathscr B_m} a_{K\times L}h_{K\times L}\in H^p(H^q)$ we obtain by
  H\"older's inequality that
  \begin{align*}
    |\langle f_m, g\rangle|
    & \leq \sum_{K\in \mathscr X_m} |K| \sum_{L\in \mathscr Y_m} |a_{K\times L}| |L|
      \leq |Y_m|^{1-1/q}\sum_{K\in \mathscr X_m} |K| \Bigl(
      \sum_{L\in \mathscr Y_m} |a_{K\times L}|^q |L|
      \Bigr)^{1/q}\\
    & \leq |X_m|^{1-1/p} |Y_m|^{1-1/q}\biggl(
      \sum_{K\in \mathscr X_m} |K| \Bigl(
      \sum_{L\in \mathscr Y_m} |a_{K\times L}|^q |L|
      \Bigr)^{p/q}
      \biggr)^{1/p}\\
    & = |X_m|^{1-1/p} |Y_m|^{1-1/q} \|g\|_{H^p(H^q)},
  \end{align*}
  and thus we have proved $\|f_m\|_{H^p(H^q)^*} \leq |X_m|^{1-1/p} |Y_m|^{1-1/q}$. For the other
  inequality, recall from~\eqref{lem:thesequencefm:0} that
  $\|f_m\|_{H^p(H^q)} = |X_m|^{1/p}|Y_m|^{1/q}$, thus
  \begin{equation*}
    \langle f_m, f_m\rangle
    = |X_m||Y_m|
    = |X_m|^{1-1/p}|Y_m|^{1-1/q} \|f_m\|_{H^p(H^q)}.
  \end{equation*}

  \eqref{lem:thesequencefm:2}. We observe that the first of the
  additional assumptions ensures that
  $\mathscr{B}_m\cap\mathscr{B}_n=\emptyset$ whenever $m,n\in\mathbb
  N$ are distinct. Set $X := X_m$ and $Y := Y_m$ for some (and hence
  all) $m\in\mathbb{N}$, and   let $(c_m)_{m\in\mathbb N}$ be a sequence
  of scalars that vanishes eventually. Since
  \begin{equation*}
    \sum_{R\in\mathscr{B}_m} \charfun_R(x,y) = \Bigl(\sum_{I\in\mathscr{X}_m}
    \charfun_I(x)\Bigr)\Bigl(\sum_{J\in\mathscr{Y}_m} \charfun_J(y)\Bigr) = \charfun_X(x)\charfun_Y(y)
  \end{equation*}  
  for all $m\in\mathbb N$ and $x,y\in[0,1)$,~\eqref{HpHqnorm} implies that
  \begin{align*} \Bigl\|\sum_m c_m f_m \Bigr\|_{H^p(H^q)}^p &= \int_0^1\biggl(\int_0^1\Bigl(\sum_m
    |c_m|^2 \charfun_X(x) \charfun_Y(y)\Bigr)^{q/2}\mathrm{d}y\biggr)^{p/q}\mathrm{d} x\\ &=
    \Bigl(\sum_m |c_m|^2\Bigr)^{p/2}|X|\,|Y|^{p/q},
  \end{align*}
  from which the conclusion follows.

  \eqref{lem:thesequencefm:3}. Let $g\in H^p(H^q)^*$ and $\varepsilon >0$. For each
  $R\in\mathscr{R}$, we can choose a scalar~$\beta_R$ with $|\beta_R| =1$ such that
  $\beta_R\langle h_R, g\rangle = |\langle h_R, g\rangle|$. Set $\beta = (\beta_R)\in\Gamma$.
  By~\eqref{lem:thesequencefm:2}, the sequence $(f_m)_{m\in\mathbb N}$ converges weakly to~$0$, so
  we can find $m_0\in\mathbb N$ such that $|\langle f_m, M_\beta^*g\rangle|\leq\varepsilon$ whenever
  $m\geq m_0$. Then, for each $\gamma = (\gamma_R)\in\Gamma$ and $m\geq m_0$ we have
  \begin{align*}
    |\langle M_\gamma f_m, g \rangle |
    &= \Bigl|\sum_{R\in\mathscr{B}_m} \gamma_R\langle h_R, g\rangle\Bigr|
      \leq \sum_{R\in\mathscr{B}_m} |\langle h_R, g\rangle|\\
    &= \sum_{R\in\mathscr{B}_m} \beta_R\langle h_R, g\rangle
      = \langle M_\beta f_m, g\rangle
      \leq\varepsilon,
  \end{align*}
  as required.

  \eqref{lem:thesequencefm:4}. Given $g\in H^p(H^q)$ and $\varepsilon >0$, we choose a finite
  subset~$\mathscr{F}$ of~$\mathscr{R}$ such that $\| g - Pg\|_{H^p(H^q)}\leq \varepsilon$, where
  $P : H^p(H^q)\to H^p(H^q)$ is the orthogonal projection given by
  $Pf = \sum_{R\in\mathscr{F}} \frac{\langle f, h_R\rangle}{|R|} h_R$.  Since the sets
  $\mathscr{B}_m,\, m\in\mathbb N$, are pairwise disjoint and~$\mathscr{F}$ is finite, we can find
  $m_0\in\mathbb N$ such that
  $\bigl(\bigcup_{m\geq m_0}\mathscr{B}_m\bigr)\cap\mathscr{F} =\emptyset$. Then, for each
  $m\geq m_0$ and $\gamma\in\Gamma$, we have $P^*f_m =0$, and hence
  \begin{align*}
    |\langle M_\gamma g, f_m\rangle|
    &= |\langle M_\gamma g, (I-P)^*f_m\rangle |= |\langle M_\gamma (I-P)g, f_m\rangle|\\
    &\leq \| M_\gamma\|\,\|g - Pg\|_{H^p(H^q)} \| f_m\|_{H^p(H^q)^*}
      \leq \varepsilon,
  \end{align*}  
  where we have used that $M_\gamma$ commutes with~$P$, and that
  \begin{equation*}
    \| f_m\|_{H^p(H^q)^*} = |X|^{1-1/p}|Y|^{1-1/q}\leq 1
  \end{equation*}
  by~\eqref{lem:thesequencefm:1}.
\end{proof}

\subsection{Embeddings and projections}\label{subsec:local_product}\hfill

\noindent
For each $R\in \drec$ let $\mathscr X_R, \mathscr Y_R\subset \dint$ denote non-empty, finite
collections of dyadic intervals that define the collection of dyadic rectangles $\mathscr B_R$ by
\begin{equation}\label{eq:symbols-0}
  \mathscr B_R
  = \{K\times L\, :\, K\in \mathscr X_R,\, L\in \mathscr Y_R\},
  \qquad R\in \drec.
\end{equation}
Now~\eqref{eq:block-basis} assumes the following form, if $\beta_R = 1$ for each $R\in\drec$:
\begin{equation}\label{eq:block-basis-capon}
  b_R(x,y)
  = \Big( \sum_{K\in \mathscr X_R} h_K(x) \Big) \Big(\sum_{L\in \mathscr Y_R} h_L(y)\Big),
  \qquad R\in \drec;
\end{equation}
see Figure~\ref{fig:2dhaar-layout}.
\begin{figure}[bt]
  \begin{center}
    \includegraphics[scale=0.25]{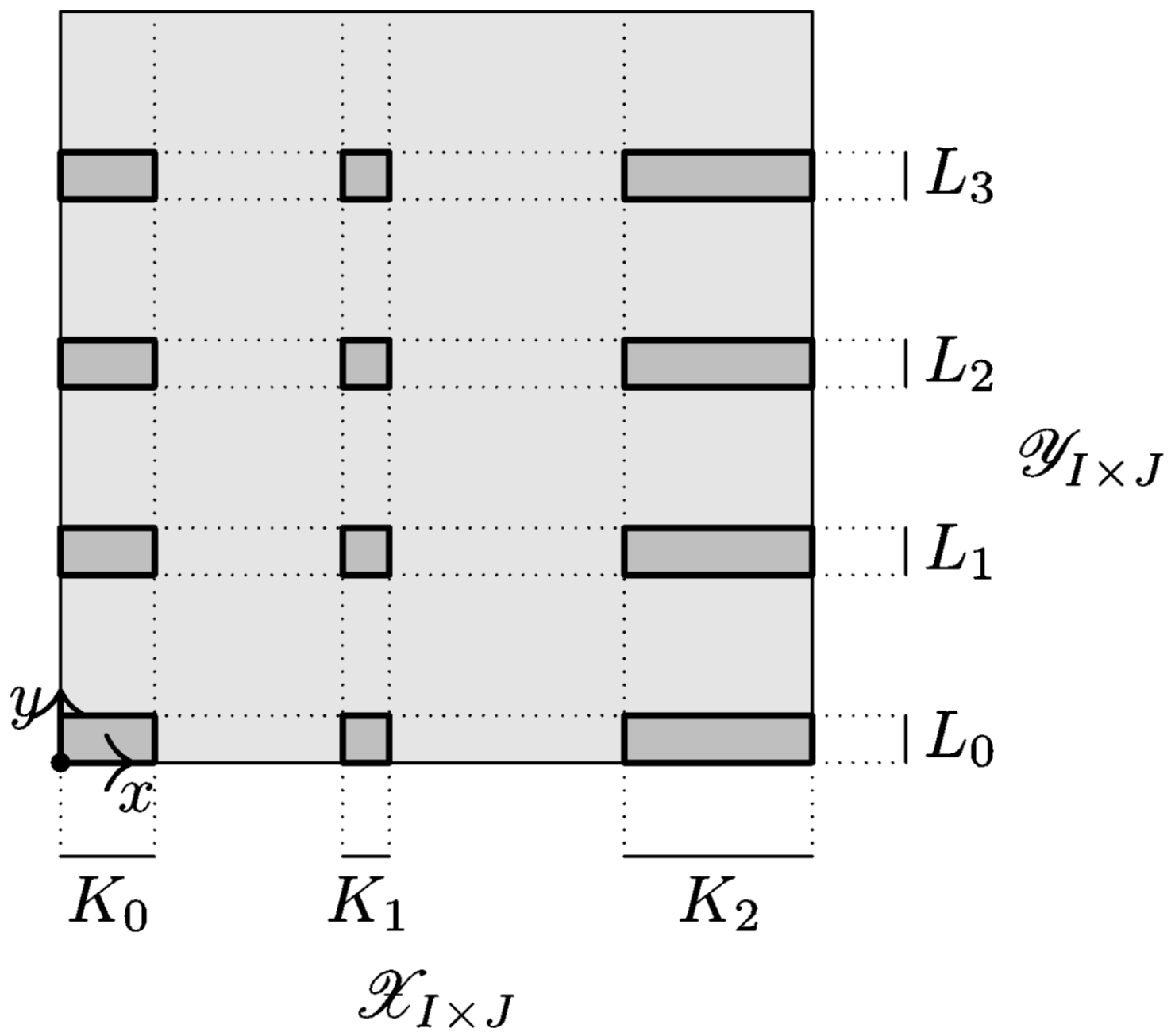}
  \end{center}
  \caption{For a dyadic rectangle $I\times J\in \drec$, this figure depicts
    $\mathscr B_{I\times J} = \mathscr X_{I\times J}\times \mathscr Y_{I\times J}$ (the collection
    of the dark gray rectangles) contained in the unit square (the light gray area). Here,
    $\mathscr X_{I\times J} = \{K_0,K_1,K_2\}$.  The dyadic rectangles in
    $K_i\times \mathscr Y_{I\times J}$ are connected by dotted lines.}\label{fig:2dhaar-layout}
\end{figure}

Capon~\cite{capon:1982} discovered a condition for $\{\mathscr B_R : R \in \drec\}$ which ensures
that the block basis $\{b_R : R \in \drec\}$ given by~\eqref{eq:block-basis-capon} is equivalent to
the Haar system $\{h_R : R \in \drec\}$ in $H^p(H^q)$, whenever $1 < p,q < \infty$ (see
Theorem~\ref{thm:capon}).  The local product condition~(P\ref{enu:p1})--(P\ref{enu:p4}) has its
roots in Capon's seminal work~\cite{capon:1982}.

We now introduce some notation. For $R \in\drec$ we set
\begin{equation}\label{eq:symbols-1}
  X_R = \bigcup\{ K : K\in \mathscr X_R \}
  \qquad\text{and}\qquad
  Y_R = \bigcup\{L : L\in \mathscr Y_R\}.
\end{equation}
For each $I_0\times J_0\in \drec$ we consider the following unions
\begin{equation}\label{eq:symbols-2}
  X_{I_0}
  = \bigcup\{ X_{I_0\times J} : J\in\dint\},
  \quad
  Y_{J_0}
  = \bigcup\{ Y_{I\times J_0} : I\in\dint\}.
\end{equation}
Clearly, for all $I\times J\in \drec$ the following crucial inclusions hold true:
\begin{equation}\label{eq:XY-inclusions}
  X_{I\times J} \subset X_I
  \qquad\text{and}\qquad
  Y_{I\times J} \subset Y_J.
\end{equation}

We say that $\{\mathscr B_{I\times J}: I\times J\in\drec\}$ given by~\eqref{eq:symbols-0} satisfies
the \emph{local product condition} with constants $C_X,C_Y>0$, if the following four
properties~(P\ref{enu:p1}), (P\ref{enu:p2}), (P\ref{enu:p3}) and~(P\ref{enu:p4}) hold true.
\begin{enumerate}[(P1)]
\item\label{enu:p1}%
  For all $R\in \drec$ the collection $\mathscr B_R$ consists of pairwise disjoint dyadic
  rectangles, and for all $R_0,R_1\in\drec$ with $R_0\neq R_1$ we have
  $\mathscr B_{R_0} \isect \mathscr B_{R_1} = \emptyset$.
\item\label{enu:p2}%
  For all $I\times J, I_0\times J_0, I_1\times J_1\in\drec$ with $I_0 \cap I_1 = \emptyset$,
  $I_0\union I_1\subset I$ and $J_0 \cap J_1 = \emptyset$, $J_0\union J_1\subset J$ we have
  \begin{align*}
    X_{I_0}\isect X_{I_1} &= \emptyset, &X_{I_0}\union X_{I_1} & \subset X_I,\\
    Y_{J_0}\isect Y_{J_1} &= \emptyset, &Y_{J_0}\union Y_{J_1} & \subset Y_J.
  \end{align*}
\item\label{enu:p3}%
  For each $R=I\times J\in\drec$, we have
  \begin{equation*}
    |I| \leq C_X |X_R|,
    \qquad |X_I| \leq C_X |I|,
    \qquad |J| \leq C_Y |Y_R|,
    \qquad |Y_J| \leq C_Y |J|.
  \end{equation*}
\item\label{enu:p4}%
  For all $I_0\times J_0, I\times J\in \drec$ with $I_0\times J_0\subset I\times J$ and for every
  $K\in \mathscr X_{I\times J}$ and $L\in \mathscr Y_{I\times J}$, we have
  \begin{equation*}
    \frac{|K\isect X_{I_0}|}{|K|} \geq  C_X^{-1}\frac{|X_{I_0}|}{|X_I|}
    \qquad\text{and}\qquad
    \frac{|L\isect Y_{J_0}|}{|L|} \geq  C_Y^{-1}\frac{|Y_{J_0}|}{|Y_J|}.
  \end{equation*}
\end{enumerate}
See Figure~\ref{fig:gamlen-gaudet} for the collections $\mathscr X_R$, $R\in \drec$, and
Figure~\ref{fig:2dhaar-one-step} as well as Figure~\ref{fig:2dhaar} for a depiction of
$\mathscr X_R$ and $\mathscr Y_R$, $R\in \drec$.

\begin{figure}[bt]
  \begin{center}
    \includegraphics{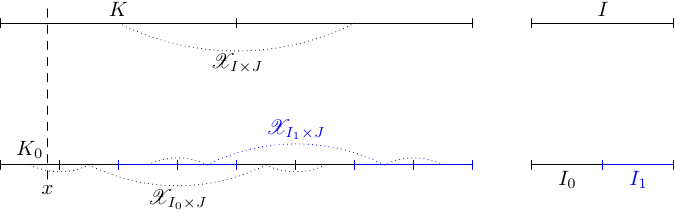}
  \end{center}
  \caption{The figure depicts the collections $\mathscr X_{I\times J}$, $\mathscr X_{I_0\times J}$,
    $\mathscr X_{I_1\times J}$, with $I_0\union I_1 = I$ and $I_0\cap I_1 = \emptyset$,
    $J\in \dint$. Given $x\in [0,1)$, the dashed vertical line connects the intervals $K_0$ and $K$
    with $x\in K_0 \subset K$. By~(P\ref{enu:p2}) we have $X_{I_0}\subset X_I$, and in the
    figure~(P\ref{enu:p4}) is realized by $\frac{|K\isect X_{I_0}|}{|K|} = \frac{|X_{I_0}|}{|X_I|}$.
  }\label{fig:gamlen-gaudet}
\end{figure}
\begin{figure}[bt]
  \begin{center}
    \includegraphics[scale=0.25]{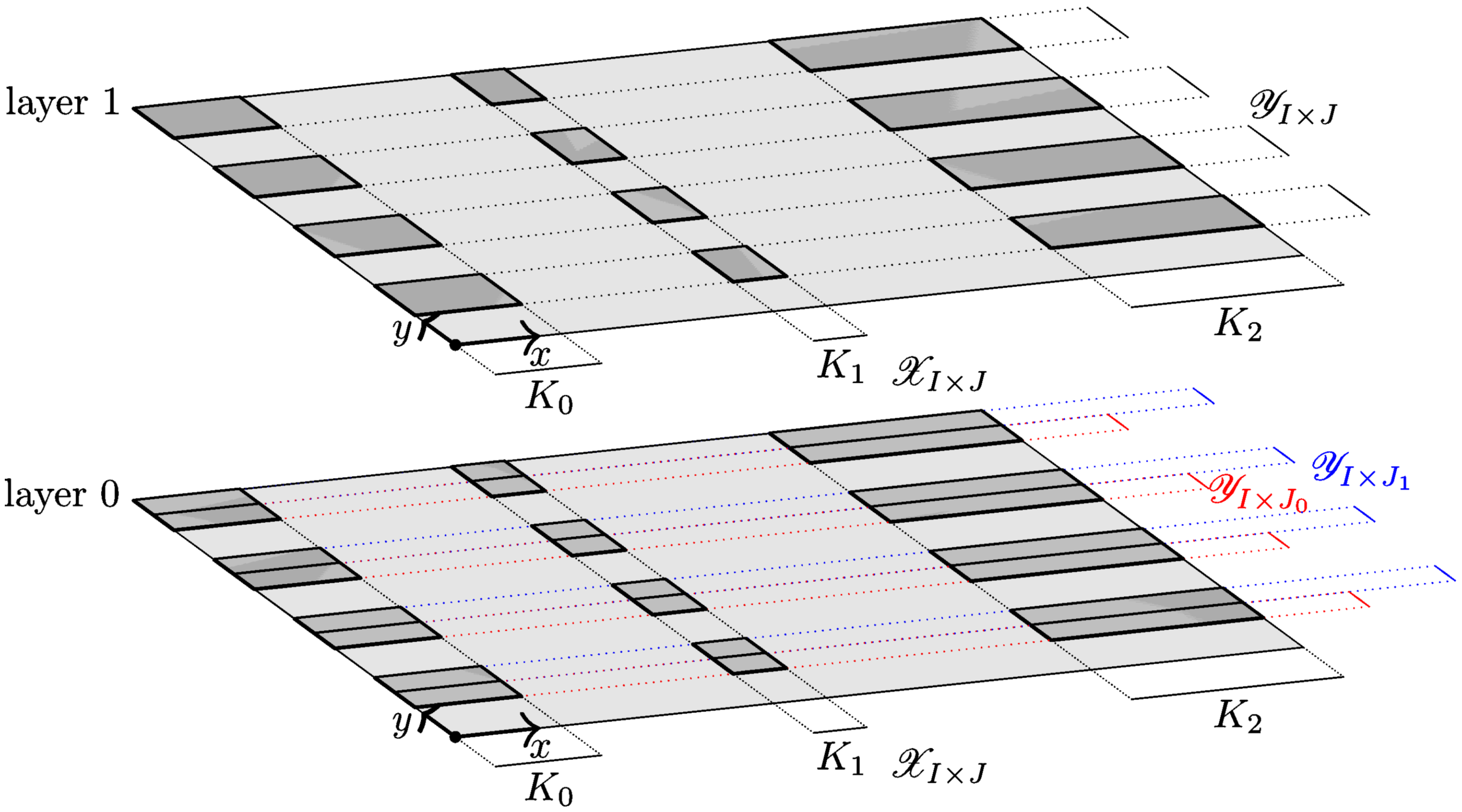}
  \end{center}
  \caption{The dyadic rectangles $I\times J$, $I\times J_0$ and $I\times J_1$ in $\drec$ are such
    that $J_0\union J_1 = J$ and $J_0\cap J_1 = \emptyset$. This figure depicts the collections
    $\mathscr B_{I\times J} = \mathscr X_{I\times J}\times \mathscr Y_{I\times J}$ in the top layer,
    and $\mathscr B_{I\times J_0} = \mathscr X_{I\times J_0}\times \mathscr Y_{I\times J_0}$ and
    $\mathscr B_{I\times J_1} = \mathscr X_{I\times J_1}\times \mathscr Y_{I\times J_1}$ in the
    bottom layer. Here,
    $\mathscr X_{I\times J} = \mathscr X_{I_0\times J} = \mathscr X_{I_1\times J} =
    \{K_0,K_1,K_2\}$. Each interval in $\mathscr Y_{I\times J}$ is split in two intervals, which are
    then placed into $\mathscr Y_{I\times J_0}$ and $\mathscr Y_{I\times J_1}$,
    respectively.}\label{fig:2dhaar-one-step}
\end{figure}
\begin{figure}[bt]
  \begin{center}
    \includegraphics[scale=0.25]{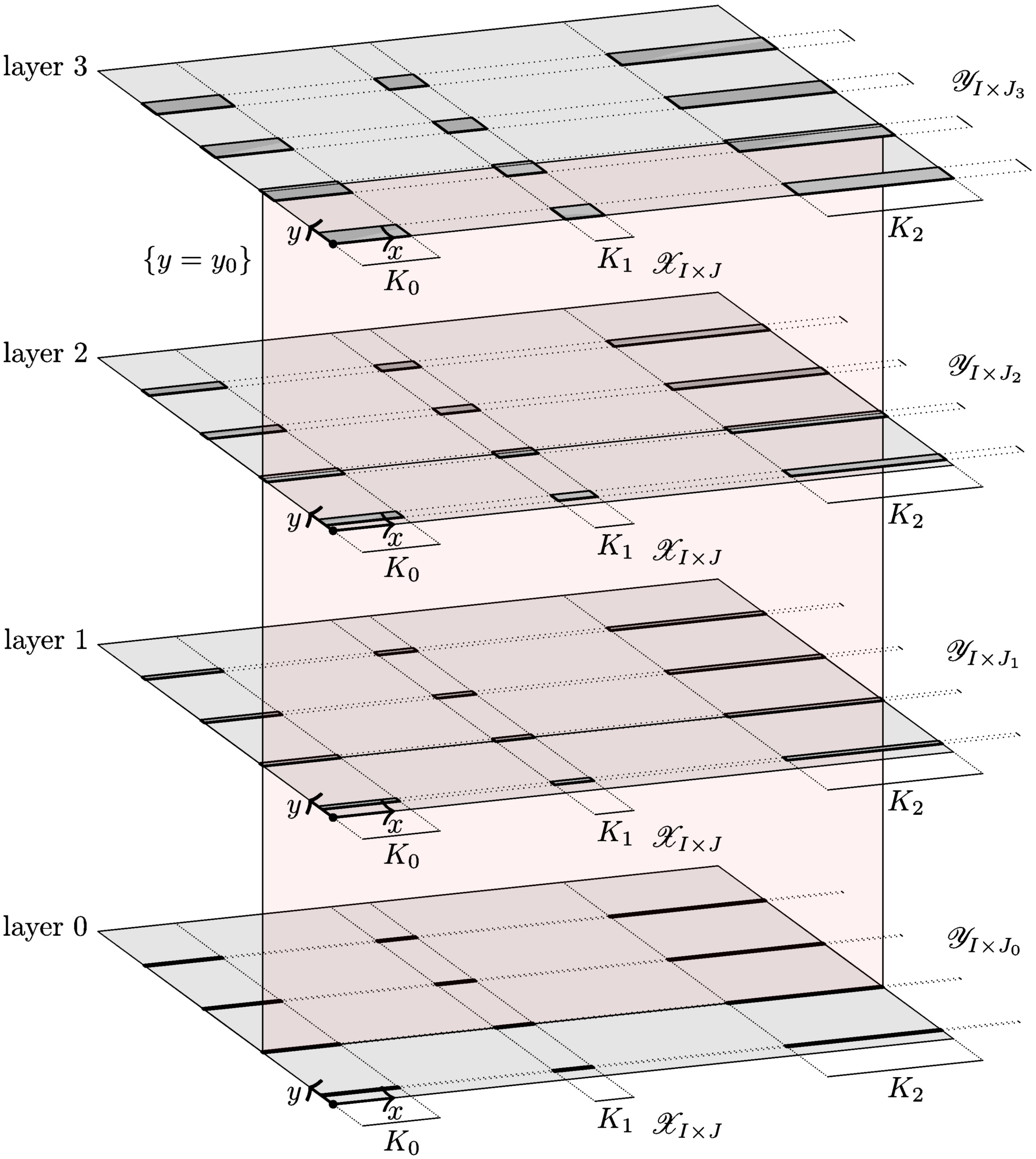}
  \end{center}
  \caption{In the figure, $\mathscr X_{I\times J_j} = \{K_0,K_1,K_2\}$, $0\leq j \leq 3$, whereas
    $\mathscr Y_{I\times J_j}$ changes with each layer $0\leq j\leq 3$.  For $y_0\in [0,1)$, the
    light red vertical plane connects the lines $\ell = \{(x,y_0) : x\in [0,1) \}$ in the four
    layers depicted in the figure.}\label{fig:2dhaar}
\end{figure}

\noindent
\begin{thm}[Capon]\label{thm:capon}
  Let $1 \leq p,q < \infty$.  If the conditions~(P\ref{enu:p1})--(P\ref{enu:p3}) are satisfied, then
  $\{b_{I\times J}\, :\, I\times J\in \drec\}$ is $C$-equivalent to
  $\{h_{I\times J}\, :\, I\times J\in \drec\}$ in $H^p(H^q)$, where $C$ depends only on $C_X$ and
  $C_Y$.
\end{thm}
We emphasize that $p$ or $q$ may take the value $1$ in the above theorem. By a duality argument,
M. Capon~\cite{capon:1982} showed the equivalence stated in Theorem~\ref{thm:capon} implies that the
orthogonal projection $P\, :\, H^p(H^q)\to H^p(H^q)$ given by
\begin{equation}\label{eq:ortho-proj}
  Pf
  = \sum_{I\times J\in \drec} \frac{\langle f, b_{I\times J}\rangle}{\|b_{I\times J}\|_2^2} b_{I\times J}
\end{equation}
is bounded on $H^p(H^q)$, whenever $1 < p,q < \infty$. We point out that the parameters $p=1$ or
$q=1$ are both excluded by the duality argument. Indeed, the duality argument of Capon shows that
\begin{equation*}
  \|P : H^p(H^q)\to H^p(H^q)\|\leq C(p,q,C_X,C_Y),
\end{equation*}
where the constants $C(p,q,C_X,C_Y)\to \infty$ in each of the cases $p\to 1$, $p\to \infty$,
$q\to 1$ or $q\to \infty$.

The next theorem is our first major step towards proving Theorem~\ref{thm:2d-andrew}. We show that
the operator $P$ is bounded on $H^p(H^q)$, $1\leq p,q < \infty$ with an upper estimate for the norm
independent of $p$ or $q$. Specifically, Theorem~\ref{thm:projection} includes the cases $p=1$ or
$q=1$.
\begin{thm}\label{thm:projection}
  Let $1 \leq p,q < \infty$, let $\{ \mathscr B_R\, :\, R\in \drec \}$ be a pairwise disjoint family
  which satisfies the local product condition~{\normalfont{(P\ref{enu:p1})--(P\ref{enu:p4})}} with
  constants~$C_X$ and~$C_Y$, and let $\beta = (\beta_Q : Q\in \bigcup_{R\in \drec} \mathscr B_R)$ be
  a family of scalars such that
  \begin{equation*}
    M:= \sup_Q |\beta_Q| < \infty.
  \end{equation*}
  Then the operators $B_\beta,A_\beta : H^p(H^q)\to H^p(H^q)$ given by
  \begin{equation*}
    B_\beta f = \sum_{R\in \drec} \frac{\langle f, h_R\rangle}{\|h_R\|_2^2} b_R^{(\beta)}
    \qquad\text{and}\qquad
    A_\beta f = \sum_{R\in \drec} \frac{\langle f, b_R^{(\beta)}\rangle}{\|b_R\|_2^2} h_R
  \end{equation*}
  satisfy the estimates
  \begin{equation}\label{thm:projection:estimates}
    \begin{aligned}
      \|B_\beta f \|_{H^p(H^q)} & \leq M C_X^{1/p}C_Y^{1/q} \|f\|_{H^p(H^q)},
      &f&\in H^p(H^q),\\
      \|A_\beta f \|_{H^p(H^q)} &\leq M C_X^{3+1/p} C_Y^{3+1/q} \|f\|_{H^p(H^q)}, &f&\in H^p(H^q).
    \end{aligned}
  \end{equation}
  If we additionally assume that
  \begin{equation*}
    m:= \inf_Q |\beta_Q| > 0,
  \end{equation*}
  and if we define the vector of scalars
  $\gamma = \bigl( \gamma_Q : Q\in \bigcup_{R\in \drec} \mathscr B_R \bigr)$ by
  $\beta_Q\gamma_Q = 1$, then the diagram
  \begin{equation}\label{thm:projection:diagram}
    \vcxymatrix{H^p(H^q) \ar[rr]^{I_{H^p(H^q)}} \ar[rd]_{B_\beta} & & H^p(H^q)\\
      &  H^p(H^q)  \ar[ru]_{A_\gamma} &
    }
  \end{equation}
  is commutative, and the operator $A_\gamma$ satisfies the estimate
  $\|A_\gamma\|\leq m^{-1} C_X^{3+1/p} C_Y^{3+1/q}$.  Moreover, the composition
  $P_{\beta,\gamma} = B_\beta A_\gamma$ is the projection $P_{\beta,\gamma} : H^p(H^q)\to H^p(H^q)$
  given by
  \begin{equation*}
    P_{\beta,\gamma}(f)
    = \sum_{R\in\drec} \frac{\langle f, b_R^{(\gamma)}\rangle}{\|b_R\|_2^2}
    b_R^{(\beta)}.
  \end{equation*}
  Consequently, the range of $B_\beta$ is complemented (by $P_{\beta,\gamma}$), and $B_\beta$ is an
  isomorphism onto its range.  Finally, if $\beta_Q=\gamma_Q=1$ for each $Q$, then
  $P_{\beta,\gamma}$ conincides with the orthogonal projection $P$ defined by~\eqref{eq:ortho-proj}.
\end{thm}

Before we proceed with the proof, we record some simple facts.
\begin{lem}\label{lem:projection-simple}
  Let $\mathscr B_R = \mathscr X_R\times \mathscr Y_R \subset \drec$, $R\in \drec$ satisfy the
  conditions~(P\ref{enu:p1}) and~(P\ref{enu:p3}). Then
  \begin{equation*}
    C_X^{-1}C_Y^{-1} |R| \leq
    \|b_R\|_2^2
    \leq C_X C_Y |R|,
    \qquad R\in \drec.
  \end{equation*}
\end{lem}

\begin{proof}
  Let $R\in \drec$ be fixed.  By condition~(P\ref{enu:p1}) and~\eqref{eq:symbols-0}, the collections
  $\mathscr X_R$ and $\mathscr Y_R$ each consist of pairwise disjoint dyadic intervals, thus,
  Lemma~\ref{lem:thesequencefm}~\eqref{lem:thesequencefm:0} yields
  \begin{equation*}
    \|b_R\|_2^2
    = |X_R| |Y_R|.
  \end{equation*}
  By~(P\ref{enu:p3}) and~\eqref{eq:XY-inclusions} we obtain
  \begin{equation*}
    C_X^{-1} C_Y^{-1} |R|
    \leq |X_R| |Y_R|
    \leq C_X C_Y |R|.
    \qedhere
  \end{equation*}

\end{proof}

Below we use Minkowski's inequality in various function spaces. For ease of reference, we include it
in the form that we need it.
\begin{lem}\label{lem:projection}
  Let $(\Omega,\mu)$ be a probability space.
  \begin{enumerate}[(i)]
  \item\label{enu:lem:projection-1} Let $1\leq r <\infty$ and let $g_k\in L^r(\Omega)$ be real
    valued. Then
    \begin{equation*}
      \int_\Omega \Big( \sum_k g_k^2 \Big)^{r/2} \dif \mu
      \geq \Big( \sum_k \big(\int_\Omega g_k\dif \mu\big)^2 \Big)^{r/2}.
    \end{equation*}
  \item\label{enu:lem:projection-2} Let $1\leq r,s < \infty$ and let $g_{k,\ell}\in L^s(\Omega)$ be
    real valued. Then
    \begin{equation*}
      \int_\Omega \Big(\sum_k\big(\sum_\ell g_{k,\ell}^2\big)^{s/2}\Big)^{r/s}\dif\mu
      \geq \bigg(\sum_k
      \Big(\sum_\ell \big(\int_\Omega g_{k,\ell}\dif\mu\big)^2\Big)^{s/2}
      \bigg)^{r/s}.
    \end{equation*}
  \end{enumerate}
\end{lem}

\begin{proof}
  First, we apply Minkowski's inequality (see
  e.g.~\cite[Corollary~5.4.2]{garling:2007},~\cite[Theorem~202]{hardy_littlewood_polya:1952}) to the
  integral and the sum over $\ell$:
  \begin{equation*}
    \bigg(\sum_k
    \Big(\sum_\ell \big(\int_\Omega g_{k,\ell}\dif\mu\big)^2\Big)^{s/2}
    \bigg)^{1/s}
    \leq \bigg(\sum_k
    \Big(\int_\Omega \big( \sum_\ell g_{k,\ell}^2\big)^{1/2}\dif\mu \Big)^{s}
    \bigg)^{1/s}.
  \end{equation*}
  Secondly, applying Minkowski's inequality to the integral and the sum over $k$ yields
  \begin{equation*}
    \bigg(\sum_k
    \Big(\int_\Omega \big( \sum_\ell g_{k,\ell}^2\big)^{1/2}\dif\mu \Big)^{s}
    \bigg)^{1/s}
    \leq \int_\Omega \Big(\sum_k
    \big( \sum_\ell g_{k,\ell}^2\big)^{s/2}
    \Big)^{1/s}\dif\mu.
  \end{equation*}
  Finally, we obtain~\eqref{enu:lem:projection-2} by H\"older's inequality.

  The assertion~\eqref{enu:lem:projection-1} follows from~\eqref{enu:lem:projection-2} by putting
  $s=2$.
\end{proof}

\begin{lem}\label{lem:reduction}
  Assume that $(Z_I : I\in \dint)$ satisfies the following condition: For all $I, I_0, I_1\in\dint$
  with $I_0 \cap I_1 = \emptyset$, $I_0\union I_1\subset I$ we have that
  \begin{eqnarray*}
    Z_{I_0}\isect Z_{I_1} = \emptyset
    \qquad\text{and}\qquad
    Z_{I_0}\union Z_{I_1}  \subset Z_I.
  \end{eqnarray*}
  Let $0 < r < \infty$, $N_0\in \mathbb N$ and $c_I \geq 0$ and define
  \begin{equation*}
    f(z) = \Big( \sum_{I\in \dint^{N_0}} c_I \charfun_{Z_I}(z) \Big)^r.
  \end{equation*}
  Then
  \begin{equation*}
    \widetilde c_I = \Big( \sum_{E\supset I} c_E \Big)^r - \Big( \sum_{E\supsetneq I} c_E \Big)^r
  \end{equation*}
  satisfies $\widetilde c_I\geq 0$ and we obtain the identity
  \begin{equation*}
    f(z) = \sum_{I\in \dint^{N_0}} \widetilde c_I \charfun_{Z_I}(z).
  \end{equation*}
\end{lem}

\begin{proof}
  Observe that by telescoping and the tree structure of the sets $(Z_I : I\in\dint)$ we have that
  \begin{equation*}
    \Big( \sum_{I\in \dint^{N_0}} c_I \charfun_{Z_I}(z) \Big)^r
    = \sum_{I\in \dint^{N_0}} \widetilde c_I \charfun_{Z_I}(z).
  \end{equation*}
  The fact that $\widetilde c_I \geq 0$ is self-evident.
\end{proof}

\begin{myproof}[Proof of Theorem~\ref{thm:projection}]
  The proof will be split into three parts. In the first part, we will give the estimate for
  $B_\beta$, and in the second part, we will establish the estimate for $A_\beta$.

  \begin{proofstep}[Part 1: The estimate for $B_\beta$]
    We emphasize that our proof of the estimate for $B_\beta$ only uses the
    conditions~(P\ref{enu:p1})--(P\ref{enu:p3}); specifically, we do not use~(P\ref{enu:p4}).

    For $N_0\in \mathbb N$ we define the collections of indices
    \begin{subequations}\label{eq:rect-coll}
      \begin{equation}
        \drec_{N_0} = \{I_0\times J_0\in \drec\, :\, I_0, J_0\in \dint_{N_0} \}
      \end{equation}
      and
      \begin{equation}
        \drec^{N_0} = \{I_0\times J_0\in \drec\, :\, I_0, J_0\in \dint^{N_0} \}.
      \end{equation}
    \end{subequations}

    Let us assume that
    \begin{equation*}
      f = \sum_{R\in \drec^{N_0}} a_R h_R.
    \end{equation*}
    Then by~(P\ref{enu:p1}) and~\eqref{eq:symbols-0} we find that
    \begin{equation*}
      \|B_\beta f\|_{H^p(H^q)}^p
      = \int_0^1 \bigg(
      \int_0^1 \Big(
      \sum_{R\in \drec^{N_0}} |a_R|^2 \sum_{Q\in\mathscr{B}_R}|\beta_Q|^2 \charfun_Q(x,y)
      \Big)^{q/2} \dif y
      \bigg)^{p/q} \dif x
      .
    \end{equation*}
    Recall that $|\beta_{I\times J}|\leq M$ and that by~\eqref{eq:XY-inclusions}
    $\charfun_{X_{I\times J}}(x) \charfun_{Y_{I\times J}}(y)\leq
    \charfun_{X_I}(x)\charfun_{Y_J}(y)$, so we note
    \begin{equation}\label{eq:thm:projection:B-estimate:1}
      \|B_\beta f\|_{H^p(H^q)}^p
      \leq M^p \int_0^1 \bigg(
      \int_0^1 \Big(
      \sum_{I\times J\in \drec^{N_0}} |a_{I\times J}|^2 \charfun_{X_I}(x)\charfun_{Y_J}(y)
      \Big)^{q/2} \dif y
      \bigg)^{p/q} \dif x
      .
    \end{equation}
    If we define
    $c_J(x) = \sum_{I\in \dint^{N_0}} |a_{I\times J}|^2
    \charfun_{X_I}(x)$,~\eqref{eq:thm:projection:B-estimate:1} reads
    \begin{equation}\label{eq:thm:projection:B-estimate:2}
      \|B_\beta f\|_{H^p(H^q)}^p
      \leq M^p \int_0^1 \bigg(
      \int_0^1 \Big(
      \sum_{J\in \dint^{N_0}} c_J(x) \charfun_{Y_J}(y)
      \Big)^{q/2} \dif y
      \bigg)^{p/q} \dif x
      .
    \end{equation}
    Lemma~\ref{lem:reduction} yields the following identity for the inner integrand
    of~\eqref{eq:thm:projection:B-estimate:2}:
    \begin{equation}\label{eq:thm:projection:B-estimate:3}
      \Big( \sum_{J\in \dint^{N_0}} c_J(x) \charfun_{Y_J}(y) \Big)^{q/2}
      = \sum_{J\in \dint^{N_0}} \widetilde c_J(x) \charfun_{Y_J}(y),
    \end{equation}
    where
    $\widetilde c_J(x) = \big( \sum_{J_1\supset J} c_{J_1}(x) \big)^{q/2} - \big(
    \sum_{J_1\supsetneq J} c_{J_1}(x) \big)^{q/2} \geq 0$.
    Integrating~\eqref{eq:thm:projection:B-estimate:3} with respect to $y$ and using that
    $|Y_J| \leq C_Y |J|$ by~(P\ref{enu:p3}), we have
    \begin{equation*}
      \int_0^1 \Big( \sum_{J\in \dint^{N_0}} c_J(x) \charfun_{Y_J}(y) \Big)^{q/2} \dif y
      \leq C_Y \sum_{J\in \dint^{N_0}} \widetilde c_J(x) |J|.
    \end{equation*}
    Combining the latter estimate with~\eqref{eq:thm:projection:B-estimate:2} yields
    \begin{equation}\label{eq:thm:projection:B-estimate:4}
      \|B_\beta f\|_{H^p(H^q)}^p
      \leq M^p C_Y^{p/q} \int_0^1 \bigg(
      \sum_{J\in \dint^{N_0}} \widetilde c_J(x) |J|
      \bigg)^{p/q} \dif x
      .
    \end{equation}  
    It remains to estimate
    $\int_0^1\big( \sum_{J\in \dint^{N_0}} \widetilde c_J(x) |J| \big)^{p/q} \dif x$ from above by a
    constant multiple of $\|f\|_{H^p(H^q)}^p$. Note that
    \begin{equation*}
      \begin{aligned}
        \big( \sum_{J_1\supset J} c_{J_1}(x) \big)^{q/2} & = \Big( \sum_{I\in \dint^{N_0}} d_{I,J}
        \charfun_{X_I}(x) \Big)^{q/2},
        \qquad\text{where}\ d_{I,J} = \sum_{J_1\supset J} |a_{I\times J_1}|^2,\\
        \big( \sum_{J_1\supsetneq J} c_{J_1}(x) \big)^{q/2} & = \Big( \sum_{I\in \dint^{N_0}}
        e_{I,J} \charfun_{X_I}(x) \Big)^{q/2}, \qquad\text{where}\ e_{I,J} = \sum_{J_1\supsetneq J}
        |a_{I\times J_1}|^2,
      \end{aligned}
    \end{equation*}
    and that $\widetilde c_J(x)$ was defined as the difference between the two quantities, above. By
    Lemma~\ref{lem:reduction}, we obtain
    \begin{equation*}
      \begin{aligned}
        \Big( \sum_{I\in \dint^{N_0}} d_{I,J} \charfun_{X_I}(x) \Big)^{q/2}
        & = \sum_{I\in \dint^{N_0}} \widetilde d_{I,J} \charfun_{X_I}(x),\\
        \Big( \sum_{I\in \dint^{N_0}} e_{I,J} \charfun_{X_I}(x) \Big)^{q/2} & = \sum_{I\in
          \dint^{N_0}} \widetilde e_{I,J} \charfun_{X_I}(x),
      \end{aligned}
    \end{equation*}
    where
    \begin{equation*}
      \begin{aligned}
        \widetilde d_{I,J} & = \Big( \sum_{I_1\supset I} d_{I_1,J} \Big)^{q/2}
        - \Big( \sum_{I_1\supsetneq I} d_{I_1,J} \Big)^{q/2} \geq 0,\\
        \widetilde e_{I,J} & = \Big( \sum_{I_1\supset I} e_{I_1,J} \Big)^{q/2} - \Big(
        \sum_{I_1\supsetneq I} e_{I_1,J} \Big)^{q/2} \geq 0.
      \end{aligned}
    \end{equation*}
    Summing up, in between~\eqref{eq:thm:projection:B-estimate:4} and here, we have shown that
    \begin{equation}\label{eq:thm:projection:B-estimate:5}
      \|B_\beta f\|_{H^p(H^q)}^p
      \leq M^p C_Y^{p/q} \int_0^1\Big( \sum_{I\in \dint^{N_0}} f_I \charfun_{X_I}(x) \Big)^{p/q}
      \dif x,
    \end{equation}
    where $f_I = \sum_{J\in \dint^{N_0}} |J| (\widetilde d_{I,J} - \widetilde e_{I,J})$.

    It is important to show that $f_I \geq 0$, for all $I\in \dint^{N_0}$. To this end, note the
    identity
    \begin{align*}
      \widetilde d_{I,J} - \widetilde e_{I,J}
      & = \Big( \sum_{\substack{I_1\supset I\\J_1\supset J}} |a_{I_1\times J_1}|^2 \Big)^{q/2}
      - \Big( \sum_{\substack{I_1\supsetneq I\\J_1\supset J}} |a_{I_1\times J_1}|^2 \Big)^{q/2}\\
      & \qquad - \Big( \sum_{\substack{I_1\supset I\\J_1\supsetneq J}} |a_{I_1\times J_1}|^2 \Big)^{q/2}
      + \Big( \sum_{\substack{I_1\supsetneq I\\J_1\supsetneq J}} |a_{I_1\times J_1}|^2 \Big)^{q/2}
      .
    \end{align*}
    Let $J_0\in \dint_{N_0}$, then grouping together the first with the third term as well as the
    second with the fourth, and summing the latter identity over $J\supset J_0$ yields
    \begin{equation*}
      \sum_{J\supset J_0} \widetilde d_{I,J} - \widetilde e_{I,J}
      = \Big( \sum_{\substack{I_1\supset I\\J_1\supset J_0}} |a_{I_1\times J_1}|^2 \Big)^{q/2}
      - \Big( \sum_{\substack{I_1\supsetneq I\\J_1\supset J_0}} |a_{I_1\times J_1}|^2 \Big)^{q/2}
      \geq 0.
    \end{equation*}
    Since we have
    \begin{equation*}
      f_I = \sum_{J_0\in \dint_{N_0}} |J_0| \sum_{J\supset J_0} (\widetilde d_{I,J} - \widetilde e_{I,J}),
    \end{equation*}
    we showed that $f_I \geq 0$.

    A final application of Lemma~\ref{lem:reduction} gives
    \begin{equation*}
      \int_0^1\Big( \sum_{I\in \dint^{N_0}} f_I \charfun_{X_I}(x) \Big)^{p/q} \dif x
      = \int_0^1 \sum_{I\in \dint^{N_0}} \widetilde f_I \charfun_{X_I}(x) \dif x
      = \sum_{I\in \dint^{N_0}} \widetilde f_I |X_I|,
    \end{equation*}
    where
    $\widetilde f_I = \big( \sum_{I_1\supset I} f_I \big)^{p/q} - \big( \sum_{I_1\supsetneq I} f_I
    \big)^{p/q}\geq 0$. Using~(P\ref{enu:p3}) in the above identity and combining it
    with~\eqref{eq:thm:projection:B-estimate:5} yields
    \begin{equation*}
      \|B_\beta f\|_{H^p(H^q)}^p
      \leq C_X M^p C_Y^{p/q} \sum_{I\in \dint^{N_0}} \widetilde f_I |I|.
    \end{equation*}
    Finally, we remark that
    \begin{equation*}
      \|f\|_{H^p(H^q)}^p = \sum_{I\in \dint^{N_0}} \widetilde f_I |I|.
    \end{equation*}
    To see this, it suffices to apply Lemma~\ref{lem:reduction} as above.
  \end{proofstep}

  \begin{proofstep}[Part 2: The estimate for $A_\beta$] 
    Let $N_0\in\mathbb{N}$, and define the collections of building blocks $\mathscr B_{N_0}$ and
    $\mathscr B^{N_0}$ by
    \begin{equation*}
      \mathscr B_{N_0}
      = \{ K_0\times L_0\in \mathscr B_{I_0\times J_0}\, :\, I_0\times J_0\in \drec_{N_0}\}
    \end{equation*}
    and
    \begin{equation*}
      \mathscr B^{N_0}
      = \{ K\times L\in \mathscr B_{I\times J}\, :\, I\times J\in \drec^{N_0}\},
    \end{equation*}
    where $\drec_{N_0}$ and $\drec^{N_0}$ are defined in~\eqref{eq:rect-coll}.  Taking into account
    that the bi-parameter Haar system is a $1$-unconditional basis of $H^p(H^q)$, it suffices to
    consider only those $f$ that can be written as follows:
    \begin{equation*}
      f = \sum_{K\times L\in \mathscr B^{N_0}} a_{K\times L} h_{K\times L}.
    \end{equation*}

    We will now estimate $\|A_\beta f\|_{H^p(H^q)}^p$. To this end, note that by the definitions of
    $A_\beta$ and the norm in $H^p(H^q)$ we have
    \begin{equation*}
      \|A_\beta f\|_{H^p(H^q)}^p
      = \int_0^1\bigg(
      \int_0^1 \Big(
      \sum_{R\in \drec^{N_0}}\frac{|\langle f, b_R^{(\beta)}\rangle|^2}{\|b_R\|_2^4} \charfun_R(x,y)
      \Big)^{q/2}
      \dif y
      \bigg)^{p/q}
      \dif x
      .
    \end{equation*}
    Since $\dint_{N_0}$ is a partition of the unit interval, we obtain that
    \begin{equation*}
      \|A_\beta f\|_{H^p(H^q)}^p
      = \sum_{I_0\in \dint_{N_0}} \int_{I_0}\bigg(
      \sum_{J_0\in \dint_{N_0}} \int_{J_0} \Big(
      \sum_{R\in \drec^{N_0}}\frac{|\langle f, b_R^{(\beta)}\rangle|^2}{\|b_R\|_2^4}
      \charfun_R(x,y)
      \Big)^{q/2}
      \dif y
      \bigg)^{p/q}
      \dif x.
    \end{equation*}
    Recall that $|\beta_Q|\leq M$, note that for $I_0,J_0\in \dint_{N_0}$ and $R\in \drec^{N_0}$ as
    in the above sums, $\charfun_R(x,y) = 1$ exactly when $R \supset I_0\times J_0$, and apply
    Lemma~\ref{lem:projection-simple} to obtain
    \begin{equation}\label{eq:projection_estimate}
      \begin{aligned}
        & \|A_\beta  f\|_{H^p(H^q)}^p\\
        & \leq M^p C_X^p C_Y^p \sum_{I_0\in \dint_{N_0}} |I_0| \bigg( \sum_{J_0\in \dint_{N_0}}
        |J_0| \Big( \sum_{\substack{R\in \drec^{N_0}\\R\supset I_0\times J_0}} \Big( \sum_{Q \in
          \mathscr B_R} \frac{|a_Q||Q|}{|R|} \Big)^2 \Big)^{q/2} \bigg)^{p/q} .
      \end{aligned}
    \end{equation}

    We continue by proving a lower bound for $\|f\|_{H^p(H^q)}^p$.  Set
    \begin{equation*}
      w_R = \sum_{Q\in \mathscr B_R} |a_Q| h_Q,
      \qquad R\in\drec^{N_0},
    \end{equation*}
    and observe that by~(P\ref{enu:p1}) we have
    \begin{equation*}
      \|f\|_{H^p(H^q)}^p
      = \int_0^1 \bigg(
      \int_0^1 \Big(
      \sum_{R\in \drec^{N_0}} w_R^2(x,y)
      \Big)^{q/2} \dif y
      \bigg)^{p/q} \dif x
      .
    \end{equation*}
    By~(P\ref{enu:p2}) the collections $\{X_{I_0} : I_0\in \dint_{N_0}\}$ and
    $\{Y_{J_0} : J_0\in \dint_{N_0}\}$ are each pairwise disjoint, thus we obtain
    \begin{equation*}
      \|f\|_{H^p(H^q)}^p
      \geq \sum_{I_0\in \dint_{N_0}} \int_{X_{I_0}} \bigg(
      \sum_{J_0\in \dint_{N_0}} |Y_{J_0}| \int_{Y_{J_0}} \Big(
      \sum_{R\in \drec^{N_0}} w_R^2(x,y)
      \Big)^{q/2} \frac{\dif y}{|Y_{J_0}|}
      \bigg)^{p/q} \dif x
      .
    \end{equation*}
    For fixed $I_0,J_0\in \dint_{N_0}$, $x\in X_{I_0}$, $y\in Y_{J_0}$ and $R\in \drec^{N_0}$, we
    have by~\eqref{eq:XY-inclusions} and~(P\ref{enu:p2}) that $w_R(x,y)\neq 0$ implies
    $R\supset I_0\times J_0$, so we obtain from the latter estimate together with~(P\ref{enu:p3})
    the following lower estimate for $C_Y^{p/q} \|f\|_{H^p(H^q)}^p$:
    \begin{equation}\label{eq:thm:projection:f-estimate:1}
      \sum_{I_0\in \dint_{N_0}} \int_{X_{I_0}} \bigg(
      \sum_{J_0\in \dint_{N_0}} |J_0| \int_{Y_{J_0}} \Big(
      \sum_{R\supset I_0\times J_0} w_R^2(x,y)
      \Big)^{q/2} \frac{\dif y}{|Y_{J_0}|}
      \bigg)^{p/q} \dif x
      .
    \end{equation}
    With $I_0, J_0\in \dint_{N_0}$ fixed, we now prepare for the application of
    Lemma~\ref{lem:projection} to the inner integral of the above estimate. We use the following
    specification. We put $\Omega = Y_{J_0}$, $\dif\mu = \frac{\dif y}{|Y_{J_0}|}$, and $r=q$.  In
    view of~\eqref{enu:lem:projection-1} of Lemma~\ref{lem:projection} we obtain that
    \begin{equation}\label{eq:thm:projection:f-estimate:2}
      \int_{Y_{J_0}} \Big(
      \sum_{R\supset I_0\times J_0} w_R^2(x,y)
      \Big)^{q/2} \frac{\dif y}{|Y_{J_0}|}
      \geq \Big(
      \sum_{R\supset I_0\times J_0}
      \Big(\int_{Y_{J_0}} |w_R(x,y)| \frac{\dif y}{|Y_{J_0}|} \Big)^2
      \Big)^{q/2}.
    \end{equation}
    By~(P\ref{enu:p1}) we have
    $|w_R(x,y)| = \sum_{K\times L\in \mathscr B_R} |a_{K\times L}| \charfun_K(x)\charfun_L(y)$,
    hence by~(P\ref{enu:p4}) and~(P\ref{enu:p3})
    \begin{align*}
      \int_{Y_{J_0}} |w_R(x,y)| \frac{\dif y}{|Y_{J_0}|}
      & = \sum_{K\times L\in \mathscr B_R}
        |a_{K\times L}| \frac{|L\isect Y_{J_0}|}{|Y_{J_0}|} \charfun_{K}(x)\\
      & \geq C_Y^{-2} \sum_{K\times L\in \mathscr B_R}
        |a_{K\times L}| \frac{|L|}{|J|} \charfun_{K}(x)
    \end{align*}
    for all $R\in \mathscr{R}^{N_0}$ with $R=I\times J\supset I_0\times J_0$. Combining the latter
    estimate with~\eqref{eq:thm:projection:f-estimate:2} and~\eqref{eq:thm:projection:f-estimate:1}
    we obtain the following lower estimate for $C_Y^{2p+p/q} \|f\|_{H^p(H^q)}^p$:
    \begin{equation}\label{eq:thm:projection:f-estimate:3}
      \sum_{I_0\in \dint_{N_0}} |X_{I_0}| \int_{X_{I_0}} \bigg(
      \sum_{J_0\in \dint_{N_0}} |J_0| \Big(
      \sum_{R\supset I_0\times J_0} v_R^2(x)
      \Big)^{q/2}
      \bigg)^{p/q} \frac{\dif x}{|X_{I_0}|}
      ,
    \end{equation}
    where we put
    $v_R(x) = \sum_{K\times L\in \mathscr B_R} \frac{|a_{K\times L}| |L|}{|J|} \charfun_K(x)$, if
    $R=I\times J$. With $I_0\in \dint_{N_0}$ fixed, we now prepare for the application of
    Lemma~\ref{lem:projection} to obtain a lower bound for the following term:
    \begin{equation}\label{eq:thm:projection:f-estimate:4}
      \int_{X_{I_0}} \bigg(
      \sum_{J_0\in \dint_{N_0}} |J_0| \Big(
      \sum_{R\supset I_0\times J_0} v_R^2(x)
      \Big)^{q/2}
      \bigg)^{p/q} \frac{\dif x}{|X_{I_0}|}.
    \end{equation}
    To this end, we use the following specification. We put $\Omega = X_{I_0}$,
    $\dif\mu = \frac{\dif x}{|X_{I_0}|}$, and $r=p$, $s=q$.  Invoking~\eqref{enu:lem:projection-2}
    of Lemma~\ref{lem:projection}, we find that~\eqref{eq:thm:projection:f-estimate:4} is bounded
    from below by
    \begin{equation}\label{eq:thm:projection:f-estimate:5}
      \bigg(
      \sum_{J_0\in \dint_{N_0}} |J_0| \Big(
      \sum_{R\supset I_0\times J_0} \Big(
      \int_{X_{I_0}} v_R(x) \frac{\dif x}{|X_{I_0}|}
      \Big)^2
      \Big)^{q/2}
      \bigg)^{p/q}.
    \end{equation}
    Recall that we defined
    $v_R(x) = \sum_{K\times L\in \mathscr B_R} \frac{|a_{K\times L}| |L|}{|J|} \charfun_K(x)$, if
    $R=I\times J$. By~(P\ref{enu:p4}) and~(P\ref{enu:p3}) we estimate
    \begin{align*}
      \int_{X_{I_0}} v_R(x) \frac{\dif x}{|X_{I_0}|}
      & = \sum_{K\times L\in \mathscr B_R} \frac{|a_{K\times L}| |L|}{|J|}
        \frac{|K\cap X_{I_0}|}{|X_{I_0}|}\\
      & \geq C_X^{-2}\sum_{Q\in \mathscr B_R} \frac{|a_Q| |Q|}{|R|}
    \end{align*}
    for all $R=I\times J\in \mathscr{R}^{N_0}$ with $R\supset I_0\times J_0$. Combining the latter
    estimate with~\eqref{eq:thm:projection:f-estimate:5},~\eqref{eq:thm:projection:f-estimate:4},
    and~\eqref{eq:thm:projection:f-estimate:3}, we obtain the following lower estimate for
    $C_X^{2p}C_Y^{2p+p/q} \|f\|_{H^p(H^q)}^p$:
    \begin{equation*}
      \sum_{I_0\in \dint_{N_0}} |X_{I_0}| \bigg(
      \sum_{J_0\in \dint_{N_0}} |J_0| \Big(
      \sum_{R\supset I_0\times J_0} \Big(
      \sum_{Q\in \mathscr B_R} \frac{|a_Q| |Q|}{|R|}
      \Big)^2
      \Big)^{q/2}
      \bigg)^{p/q}.
    \end{equation*}
    Finally, by~(P\ref{enu:p3}) the latter estimate yields
    \begin{equation}\label{eq:thm:projection:f-estimate:6}
      \begin{aligned}
        C_X^{2p+1} & C_Y^{2p+p/q} \|f\|_{H^p(H^q)}^p \geq\\
        & \sum_{I_0\in \dint_{N_0}} |I_0| \bigg( \sum_{J_0\in \dint_{N_0}} |J_0| \Big(
        \sum_{R\supset I_0\times J_0} \Big( \sum_{Q\in \mathscr B_R} \frac{|a_Q| |Q|}{|R|} \Big)^2
        \Big)^{q/2} \bigg)^{p/q}.
      \end{aligned}
    \end{equation}
    Direct comparison with~\eqref{eq:projection_estimate} gives
    \begin{equation*}
      \|A_\beta f\|_{H^p(H^q)}
      \leq M C_X^{3+1/p} C_Y^{3+1/q} \|f\|_{H^p(H^q)}.
    \end{equation*}

  \end{proofstep}
  \begin{proofstep}[Part 3: Conclusion of the proof]
    If additionally, we assume that $m:= \inf_Q |\beta_Q| > 0$, Part 2 implies that $A_\gamma$ is
    bounded by $ m^{-1} C_X^{3+1/p} C_Y^{3+1/q}$.  The commutativity of the
    diagram~\eqref{thm:projection:diagram} follows from the fact that $\beta_Q\gamma_Q = 1$.\qedhere
  \end{proofstep}
\end{myproof}

\subsection{A linear order on \pmb{$\drec$} and Capon's local product
  condition}\label{sec:order-clpc}\hfill

\noindent
In Section~\ref{sec:andrew}, we will iteratively construct collections of dyadic rectangles
$\mathscr{B}_R\subset\drec$, $R\in\drec$ satisfying Capon's local product condition.  This will be
accomplished by organizing the dyadic rectangles according to the linear order $\drless$ defined in
the present section, below.  The other purpose of this section is to introduce the auxiliary
condition (R\ref{enu:r1})--(R\ref{enu:r6}) and to show that it implies Capon's local product
condition (P\ref{enu:p1})--(P\ref{enu:p4}).

First, we define the bijective function $\mathcal{O}_{\mathbb N_0^2} : \mathbb N_0^2\to \mathbb N_0$
by
\begin{equation*}
  \mathcal O_{\mathbb N_0^2}(m,n) =
  \begin{cases}
    n^2 + m, & \text{if $m < n$},\\
    m^2 + m + n, & \text{if $m \geq n$}.
  \end{cases}
\end{equation*}
To see that $\mathcal O_{\mathbb N_0^2}$ is bijective consider that for each $k\in \mathbb N$:
\begin{itemize}
\item $\mathcal{O}_{\mathbb{N}_0^2}(0,0)=0$,
\item $m\mapsto \mathcal O_{\mathbb N_0^2}(m,k)$ maps $\{0,\dots,k-1\}$ bijectively onto
  $\{k^2,\dots,k^2+k-1\}$ and preserves the natural order on $\mathbb N_0$,
\item $\mathcal O_{\mathbb N_0^2}(k,0) = \mathcal O_{\mathbb N_0^2}(k-1,k)+1$,
\item $n\mapsto \mathcal O_{\mathbb N_0^2}(k,n)$ maps $\{0,\dots,k\}$ bijectively onto
  $\{k^2+k,\dots,k^2+2k\}$ and preserves the natural order on $\mathbb N_0$,
\item $\mathcal{O}_{\mathbb{N}_0^2}(0,k+1)=\mathcal{O}_{\mathbb{N}_0^2}(k,k)+1$.
\end{itemize}
See Figure~\ref{fig:ordering-relation:1} for a depiction of $\mathcal{O}_{\mathbb{N}_0^2}$.

Now, let $\lesslex$ denote the lexicographic order on $\mathbb R^3$. For two dyadic rectangles
$I_k\times J_k\in \drec$ with $|I_k|=2^{-m_k}$, $|J_k|=2^{-n_k}$, $k=0,1$, we define
$I_0\times J_0 \drless I_1\times J_1$ if and only if
\begin{equation*}
  \big( \mathcal O_{\mathbb N_0^2}(m_0,n_0),\inf I_0, \inf J_0 \big)
  \lesslex \big( \mathcal O_{\mathbb N_0^2}(m_1,n_1),\inf I_1, \inf J_1 \big).
\end{equation*}
Associated to the linear ordering $\drless$ is the bijective index function
$\drindex : \drec \rightarrow \mathbb N_0$ defined by
\begin{equation*}
  \drindex(R_0) < \drindex(R_1)
  \Leftrightarrow R_0 \drless R_1,
  \qquad R_0,R_1 \in \drec.
\end{equation*}
The geometry of a dyadic rectangle is linked to its index by the estimate
\begin{equation}\label{eq:ordering-1}
  (2^k -1)^2 \leq \drindex(I \times J) < (2^{k+1} -1)^2,
  \qquad \text{whenever $\min(|I|,|J|) = 2^{-k}$},
\end{equation}
and hence,
\begin{equation}\label{eq:ordering-estimate}
  \frac{1}{(1+\sqrt{i})^2}
  \leq |I|\, |J|,
  \qquad i=\drindex(I\times J).
\end{equation}
The index of a dyadic rectangle and its predecessors are related by
\begin{equation}\label{eq:ordering-2}
  \widetilde I\times J \drless I\times J,\ \text{for $I\neq [0,1)$}
  \qquad\text{and}\qquad
  I\times \widetilde J \drless I\times J,\ \text{for $J\neq [0,1)$},
\end{equation}
where we recall that for $I\ne [0,1)$, $\widetilde I$ is the unique dyadic interval satisfying
$\widetilde I \supset I$ and $|\widetilde I| = 2 |I|$.  See Figure~\ref{fig:ordering-relation:2} for
a picture of $\drindex$.
\begin{figure}[bt]
  \begin{center}
    \includegraphics{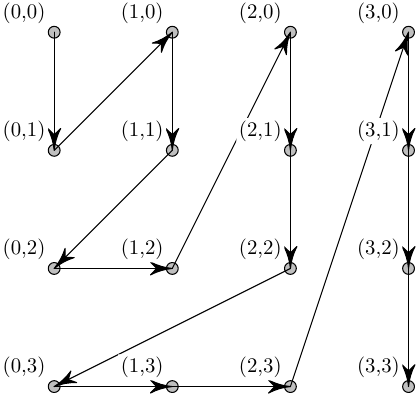}
  \end{center}
  \caption{This figure depicts the order of the first 16 pairs in $\mathbb N_0^2$ with respect to
    the map $\mathcal{O}_{\mathbb N_0^2}$.}\label{fig:ordering-relation:1}
\end{figure}
\begin{figure}[bt]
  \begin{center}
    \includegraphics{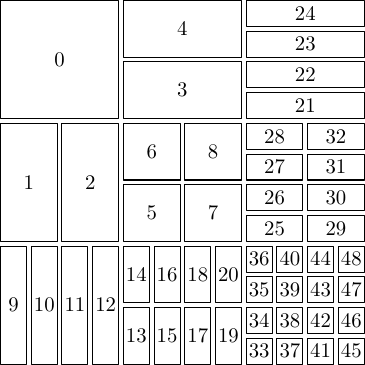}
  \end{center}
  \caption{The first $49$ rectangles and their indices $\drindex$.}\label{fig:ordering-relation:2}
\end{figure}

For a dyadic interval~$I$, we write $I^\ell$ and $I^r$ for the dyadic intervals which are the left
and right halves of $I$, respectively.  In the following definition, we use the notation introduced
in~\eqref{eq:symbols-1}, so that for a collection $\mathscr{X}_R$ (respectively, $\mathscr{Y}_R$) of
dyadic intervals, $X_R$ (respectively, $Y_R$) denotes its union.
\begin{dfn}\label{dfn:aux-fin}
  Let $\mathscr{A} = \drec$ or $\mathscr{A} = \{ R\in\drec : R \drlesseq R_0\}$ for some
  $R_0\in\mathscr{R}$.  We say that $\{ \mathscr{B}_R : R\in \mathscr{A}\}$ satisfies the
  \emph{auxiliary condition (R\ref{enu:r1})--(R\ref{enu:r6})} if the following properties hold true.
  \begin{enumerate}[(R1)]
  \item\label{enu:r1} For each $R\in\mathscr{A}$, there are non-negative integers $\mu(R)$, $\nu(R)$
    and non-empty sets $\mathscr{X}_R\subset\mathscr{D}_{\mu(R)}$ and
    $\mathscr{Y}_R\subset\mathscr{D}_{\nu(R)}$ such that
    $\mathscr{B}_R = \{ K\times L : K\in\mathscr{X}_R,\, L\in\mathscr{Y}_R\}$.

  \item\label{enu:r2} $\mu([0,1)\times [0,1)) = \nu([0,1)\times [0,1)) = 0$ and
    $\mathscr{X}_{[0,1)\times [0,1)} = \mathscr{Y}_{[0,1)\times [0,1)} = \{[0,1)\}$.

  \item\label{enu:r3} For each $I\in\mathscr{D}\setminus\{[0,1)\}$ with
    $R = I\times [0,1)\in\mathscr{A}$
    \begin{equation*}
      X_{I\times [0,1)} =
      \begin{cases} \bigcup\{ K^\ell :
        K\in\mathscr{D}_{\kappa(R)},\,
        K\subset X_{\widetilde{I}\times [0,1)}\} &\text{if}\ \ I = \widetilde{I}^\ell,\\
        \bigcup\{ K^r : K\in\mathscr{D}_{\kappa(R)},\, K\subset X_{\widetilde{I}\times [0,1)}\}
        &\text{if}\ \ I = \widetilde{I}^r,
      \end{cases}
    \end{equation*}
    where $\kappa(R) = \max\{\mu(S) : S\drless [0,|I|)\times [0,1)\}$;

  \item\label{enu:r4} If $R = I\times J\in\mathscr{A}$ with $|I|<|J|$, then
    \begin{equation*}
      \mu(R)> \max\{ \mu(S) : S\drless R\},
    \end{equation*}
    $X_R = X_{I\times [0,1)}$, and $\mathscr{Y}_{R} = \mathscr{Y}_{I'\times J}$, where
    $I'\in\mathscr{D}$ is the unique dyadic interval such that $I'\supset I$ and $|I'| = |J|$.
    
  \item\label{enu:r5} For $J\in\mathscr{D}\setminus\{[0,1)\}$ with $R = [0,1)\times J\in\mathscr{A}$
    \begin{equation*}
      Y_{[0,1)\times J} =
      \begin{cases} \bigcup\{ L^\ell :
        L\in\mathscr{D}_{\lambda(R)},\, L\subset Y_{[0,1)\times \widetilde{J}}\}
        &\text{if}\ \ J = \widetilde{J}^\ell,\\
        \bigcup\{ L^r : L\in\mathscr{D}_{\lambda(R)},\, L\subset
        Y_{[0,1)\times\widetilde{J}}\}
        &\text{if}\ \ J = \widetilde{J}^r,
      \end{cases}
    \end{equation*} where $\lambda(R) =
    \max\{\nu(S) : S\drless [0,1)\times [0,|J|)\}$.
    
  \item\label{enu:r6} If $R = I\times J\in\mathscr{A}\setminus\{[0,1)\times [0,1)\}$ with
    $|I|\ge|J|$, then
    \begin{equation*}
      \nu(R)> \max\{ \nu(S) : S\drless R\},
    \end{equation*}
    $Y_R = Y_{[0,1)\times J}$, and $\mathscr{X}_{R} = \mathscr{X}_{I\times J'}$, where
    $J'\in\mathscr{D}$ is the unique dyadic interval such that $J'\supset J$ and $|J'| = 2|I|$ if
    $I\ne[0,1)$, and $J'=[0,1)$ if $I= [0,1)$.
  \end{enumerate}
\end{dfn}

\begin{rem}\label{rem:aux-condition}
  Let $\{\mathscr{B}_R : R\in\mathscr{R}\}$ be a collection such that each of the finite
  sub-collections $\{\mathscr{B}_R : R\drlesseq R_0\}$, $R_0\in\drec$, satisfies the auxiliary
  condition (R\ref{enu:r1})--(R\ref{enu:r6}).  Then it is easy to see that
  $\{\mathscr{B}_R : R\in\drec\}$ itself satisfies the auxiliary condition
  (R\ref{enu:r1})--(R\ref{enu:r6}).
\end{rem}

\begin{lem}\label{conditionsensuringP1P4}
  Let $\{\mathscr{B}_R : R\in\mathscr{R}\}$ satisfy the auxiliary condition
  (R\ref{enu:r1})--(R\ref{enu:r6}). Then $\{\mathscr{B}_R : R\in\mathscr{R}\}$ satisfies the local
  product condition (P\ref{enu:p1})--(P\ref{enu:p4}) with constants $C_X = C_Y=1$.
\end{lem}

\begin{proof}
  The usual linear order $\prec$ on dyadic intervals is given by $ I_1\prec I_0$ if and only if
  either $|I_1|>|I_0|$ or $|I_1|=|I_0|$ and $\min I_1<\min I_0$.  The proof uses induction with
  respect to the linear orders $\prec$ and $\drless$.
  \begin{proofcase}[Verification of (P\ref{enu:p1})]
    For each $R\in\mathscr{R}$, $\mathscr{X}_R$ consists of pairwise disjoint intervals
    because~$\mathscr{X}_R$ is contained in~$\mathscr{D}_{\mu(R)}$. Similarly,
    $\mathscr{Y}_R\subset \dint_{\nu(R)}$ and consists of pairwise disjoint intervals, and therefore
    the rectangles in~$\mathscr{B}_R$ are pairwise disjoint.

    Now suppose that $R_0,R_1\in\mathscr{R}$ are distinct. By relabelling them if necessary, we may
    suppose that $R_1\drless R_0$, where $R_0 = I_0\times J_0\ne [0,1)\times [0,1)$.  To establish
    the disjointness of~$\mathscr{B}_{R_0}$ and~$\mathscr{B}_{R_1}$, we must show that either
    $\mathscr{X}_{R_0}$ and~$\mathscr{X}_{R_1}$ are disjoint or $\mathscr{Y}_{R_0}$
    and~$\mathscr{Y}_{R_1}$ are disjoint.  If $|I_0|<|J_0|$, then~(R\ref{enu:r4}) implies
    that~$\mu(R_0) > \mu(R_1)$, so that
    $\mathscr{X}_{R_0}\cap\mathscr{X}_{R_1}\subset\mathscr{D}_{\mu(R_0)}\cap\mathscr{D}_{\mu(R_1)} =
    \emptyset$.  Otherwise $|I_0|\ge|J_0|$, in which case a similar argument based
    on~(R\ref{enu:r6}) shows that $\mathscr{Y}_{R_0}\cap\mathscr{Y}_{R_1}= \emptyset$.
  \end{proofcase}

  \begin{proofcase}[Verification of (P\ref{enu:p2})]
    We begin by oberving that (R\ref{enu:r4}) and (R\ref{enu:r6}) imply that the sets $X_R$, $Y_R$,
    $X_I$, and~$Y_J$ defined in~\eqref{eq:symbols-1}--\eqref{eq:symbols-2} are given by
    \begin{equation}\label{XIandYJsets}
      X_R = X_{I\times [0,1)} = X_I\qquad\text{and}\qquad Y_R =
      Y_{[0,1)\times J} = Y_J,\qquad R =
      I\times J\in\mathscr{R}.
    \end{equation}
    Since the order $\prec$ is linear, and the set~$\mathscr{D}$ is countable and has a minimum
    element~$[0,1)$ with respect to~$\prec$, we may use induction on $I_0\in\mathscr{D}$ to prove
    the following two statements:
    \begin{enumerate}[(a)]
    \item\label{conditionsensuringP1P4eqP2i}
      $X_{I_0\times [0,1)}\cap X_{I_1\times [0,1)} = \emptyset$ and
      $Y_{[0,1)\times I_0}\cap Y_{[0,1)\times I_1} = \emptyset$ for each $I_1\in\mathscr{D}$ with
      $I_1\prec I_0$ and $I_0\cap I_1 = \emptyset$;
    \item\label{conditionsensuringP1P4eqP2ii} $X_{I_0\times [0,1)}\subset X_{I_1\times [0,1)}$ and
      $Y_{[0,1)\times I_0}\subset Y_{[0,1)\times I_1}$ for each $I_1\in\mathscr{D}$ with
      $I_0\subset I_1$.
    \end{enumerate}
    The statements~\eqref{conditionsensuringP1P4eqP2i} and~\eqref{conditionsensuringP1P4eqP2ii}
    above together with~\eqref{XIandYJsets} imply (P\ref{enu:p2}).  The start of the induction is
    easy. Indeed, suppose that $I_0 = [0,1)$. Then no $I_1$ satisfies $I_1\prec [0,1)$, so
    that~\eqref{conditionsensuringP1P4eqP2i} is vacuous, while~\eqref{conditionsensuringP1P4eqP2ii}
    holds trivially because $I_1 = [0,1)$ is the only dyadic interval which contains~$[0,1)$.

    Now let $I_0\in\mathscr{D}\setminus\{[0,1)\}$, and assume inductively
    that~\eqref{conditionsensuringP1P4eqP2i}--\eqref{conditionsensuringP1P4eqP2ii} have been
    established for each $I_0'\prec I_0$ (that
    is,~\eqref{conditionsensuringP1P4eqP2i}--\eqref{conditionsensuringP1P4eqP2ii} hold
    whenever~$I_0$ is replaced with~$I_0'$).  We shall prove the statements
    concerning~$X_{I_0\times [0,1)}$; the proofs for $Y_{[0,1)\times I_0}$ are similar, requiring
    only minor adjustments of the notation.

    To verify~\eqref{conditionsensuringP1P4eqP2i}, suppose that $I_1\in\mathscr{D}$ satisfies
    $I_1\prec I_0$ and $I_0\cap I_1 = \emptyset$. Then either $I_1\cap\widetilde{I}_0 = \emptyset$,
    or $I_1 = (\widetilde{I}_0)^\ell$ and $I_0 = (\widetilde{I}_0)^r$. (Note that because
    $I_1\prec I_0$, we cannot have $I_1 = (\widetilde{I}_0)^r$ and $I_0 = (\widetilde{I}_0)^\ell$.)
    In the first case, since $I_1\prec I_0$ and $\widetilde{I}_0\prec I_0$, the induction hypothesis
    implies that $X_{\widetilde{I}_0\times [0,1)}\cap X_{I_1\times [0,1)}=\emptyset$, from which the
    result follows because $X_{I_0\times [0,1)}\subset X_{\widetilde{I}_0\times [0,1)}$ by
    (R\ref{enu:r3}).

    In the second case, we observe that $\widetilde{I}_0 = \widetilde{I}_1$ and $|I_0| = |I_1|$, so
    that $\kappa(I_0\times [0,1)) = \kappa(I_1\times [0,1))$.  This implies that
    $X_{I_0\times [0,1)}$ and $X_{I_1\times [0,1)}$ are disjoint because $X_{I_0\times [0,1)}$ is
    the disjoint union of the \textsl{right} halves of the intervals
    $K\in\mathscr{D}_{\kappa(I_0\times [0,1))}$ with $K\subset X_{\widetilde{I}_0\times [0,1)}$,
    while $X_{I_1\times [0,1)}$ is the disjoint union of the \textsl{left} halves of the same
    intervals.

    Next, to prove~\eqref{conditionsensuringP1P4eqP2ii}, suppose that $I_1\in\mathscr{D}$ with
    $I_0\subset I_1$. The inclusion is obvious if $I_0=I_1$, so we may suppose that
    $I_0\subsetneq I_1$. Then we have $\widetilde{I}_0\subset I_1$, so the induction hypothesis
    implies that $X_{\widetilde{I}_0\times [0,1)}\subset X_{I_1\times [0,1)}$. Hence the statement
    follows from the fact that $X_{I_0\times [0,1)}\subset X_{\widetilde{I}_0\times [0,1)}$.
  \end{proofcase}

  \begin{proofcase}[Verification of (P\ref{enu:p3})]
    The proofs of (P\ref{enu:p3}) and (P\ref{enu:p4}) both rely on the following two identities:
    \begin{equation}\label{eqKcapXIandLcapYJ}
      |K\cap X_{I\times [0,1)}|
      = \frac{|K\cap  X_{\widetilde{I}\times [0,1)}|}{2}
      \qquad\text{and}\qquad 
      |L\cap Y_{[0,1)\times J}|
      = \frac{|L\cap Y_{[0,1)\times \widetilde{J}}|}{2},
    \end{equation}
    valid for $I,J\in\mathscr{D}\setminus\{[0,1)\}$, $K\in\mathscr{D}^{\kappa(I\times [0,1))}$, and
    $L\in\mathscr{D}^{\lambda([0,1)\times J)}$.

    We shall establish the first of these identities; again, the proof of the other requires only
    notational changes. For $I\in\mathscr{D}\setminus\{[0,1)\}$ and
    $K\in\mathscr{D}^{\kappa(I\times [0,1))}$, set
    $\mathscr{V}_I(K) = \{ K_0\in\mathscr{D}_{\kappa(I\times [0,1))} : K_0\subset K\cap
    X_{\widetilde{I}\times [0,1)}\}$. We claim that
    \begin{equation}\label{XItildeEq}
      K\cap  X_{\widetilde{I}\times [0,1)} = \bigcup\mathscr{V}_I(K)\quad\text{and}\quad
      K\cap X_{I\times [0,1)} = \begin{cases} \bigcup\{ K_0^\ell : K_0\in\mathscr{V}_I(K)\} &\text{if}\ I = \widetilde{I}^\ell\\
        \bigcup\{ K_0^r : K_0\in\mathscr{V}_I(K)\}  &\text{if}\ I = \widetilde{I}^r. \end{cases}
    \end{equation}
    Indeed, the inclusion $\bigcup\mathscr{V}_I(K)\subset K\cap X_{\widetilde{I}\times [0,1)}$ is
    clear from the definition of~$\mathscr{V}_I(K)$.  Conversely, for each
    $x\in K\cap X_{\widetilde{I}\times [0,1)}$, there is a (necessarily unique) interval
    $K_0\in\mathscr{X}_{\widetilde{I}\times [0,1)}$ such that $x\in K_0$. We have
    $\mu(\widetilde{I}\times [0,1))\le \kappa(I\times [0,1))$ because
    $\widetilde{I}\times [0,1)\drless [0,|I|)\times [0,1)$, so we can find
    $K_1\in\mathscr{D}_{\kappa(I\times [0,1))}$ such that $x\in K_1\subset K_0$. The sets $K_1$
    and~$K$ are not disjoint as they both contain~$x$; combined with the fact that $|K_1|\le |K|$,
    this shows that $K_1\subset K$. Moreover, we have
    $K_1\subset K_0\subset X_{\widetilde{I}\times [0,1)}$, so that $K_1\in\mathscr{V}_I(K)$, and
    hence $x\in K_1\subset\bigcup\mathscr{V}_I(K)$.

    Moving on to the second part of~\eqref{XItildeEq}, we obtain the inclusion~$\supset$ directly
    from the definition of~$\mathscr{V}_I(K)$ and~(R\ref{enu:r3}).  Conversely, suppose that
    $x\in K\cap X_{I\times [0,1)}$, so that $x\in K$ and either $x\in K_0^\ell$ or $x\in K_0^r$
    (depending on whether $I = (\widetilde{I})^\ell$ or $I = (\widetilde{I})^r$) for some
    $K_0\in\mathscr{D}_{\kappa(I\times [0,1))}$ with $K_0\subset X_{\widetilde{I}\times [0,1)}$. In
    both cases, we see that $K\cap K_0\ne\emptyset$ and $|K_0|\le |K|$, so that $K_0\subset K$, and
    hence $K_0\in\mathscr{V}_I(K)$, from which the inclusion follows.

    The first equation in~\eqref{eqKcapXIandLcapYJ} is immediate from~\eqref{XItildeEq} because
    $\mathscr{V}_I(K)$ consists of disjoint sets and $|K_0^\ell| = |K_0^r| = |K_0|/2$.

    We can now easily establish (P\ref{enu:p3}) with $C_X=C_Y=1$. By~\eqref{XIandYJsets}, we must
    show that
    \begin{equation}\label{eqP3}
      |X_{I\times [0,1)}| = |I|\qquad\text{and}\qquad
      |Y_{[0,1)\times I}| = |I|,\qquad I\in\mathscr{D}. 
    \end{equation}
    We do so by induction on~$I$. The start of the induction, where $I = [0,1)$, follows immediately
    from the fact that $X_{[0,1)\times [0,1)} = Y_{[0,1)\times [0,1)} = [0,1)$ by (R\ref{enu:r2}).

    Now let $I\in\mathscr{D}\setminus\{[0,1)\}$, and assume inductively that the result is true for
    each $I'\prec I$.  Using~\eqref{eqKcapXIandLcapYJ} with $K = L= [0,1)$, we obtain that
    $|X_{I\times [0,1)}| = |X_{\widetilde{I}\times [0,1)}|/2 = |\widetilde{I}|/2 = |I|$ because
    $\widetilde{I}\prec I$ and likewise $|Y_{[0,1)\times I}| = |I|$.
  \end{proofcase}

  \begin{proofcase}[Verification of (P\ref{enu:p4})]
    We shall prove that, for each $R_0 = I_0\times J_0$ and $R = I\times J$ in~$\mathscr{R}$ with
    $R_0\subset R$,
    \begin{equation}\label{eqP4}
      \frac{|K\cap X_{I_0\times [0,1)}|}{|I_0|} = \frac{|K|}{|I|}\qquad\text{and}\qquad
      \frac{|L\cap Y_{[0,1)\times J_0}|}{|J_0|} = \frac{|L|}{|J|},\qquad
      K\in\mathscr{X}_R,\,L\in\mathscr{Y}_R.
    \end{equation}
    By~\eqref{XIandYJsets} and~\eqref{eqP3}, this will verify (P\ref{enu:p4}) with $C_X = C_Y =1$.

    The proof of~\eqref{eqP4} is by induction on~$R_0$. The start of the induction is trivial
    because the only $R\in\mathscr{R}$ that contains $R_0 = [0,1)\times [0,1)$ is~$R_0$ itself.

    Now let $R_0\in\mathscr{R}\setminus\{[0,1)\times [0,1)\}$, and assume inductively
    that~\eqref{eqP4} has been verified for each $R_0'\drless R_0$. This time, we shall focus on the
    proof of the second identity in~\eqref{eqP4}; the proof of the first identity is similar, but
    formally slightly easier due to the lack of symmetry between conditions~(R\ref{enu:r4}) and
    (R\ref{enu:r6}): when $|I|=|J|$, we re-use an existing set as $\mathscr{X}_R$ and define a new
    set~$\mathscr{Y}_R$.

    Suppose that $R = I\times J\in\mathscr{R}$ with $R_0\subset R$, and let $L\in\mathscr{Y}_R$. If
    $J_0 = J$, then $L\subset Y_{[0,1)\times J_0}$, and the identity is immediate. Hence we may
    suppose that $J_0\subsetneq J$. Moreover, we may suppose that $|I|\ge |J|$. Indeed, if not, then
    by~(R\ref{enu:r4}) $\mathscr{Y}_R = \mathscr{Y}_{I'\times J}$, where $I'\in\mathscr{D}$
    satisfies $I'\supset I$ and $|I'| = |J|$, so that we may replace~$I$ with~$I'$ to obtain that
    $|I|\ge |J|$.

    Then we have $|J_0| < |J| = \min\{|I|,|J|\}$, so that $R\drless [0,1)\times [0,|J_0|)$, and
    hence $\lambda([0,1)\times J_0)\ge \nu(R)$; thus
    $L\in\mathscr{Y}_R\subset\mathscr{D}_{\nu(R)}\subset\mathscr{D}^{\lambda([0,1)\times J_0)}$, so
    that~\eqref{eqKcapXIandLcapYJ} shows that
    $|L\cap Y_{[0,1)\times J_0}| = |L\cap Y_{[0,1)\times \widetilde{J}_0}|/2$. Now
    $R_0' = I_0\times\widetilde{J}_0$ satisfies $R_0'\drless R_0$ and $R_0'\subset R$, and therefore
    the induction hypothesis implies that
    $|L\cap Y_{[0,1)\times \widetilde{J}_0}|/|\widetilde{J}_0| = |L|/|J|$. Hence the conclusion
    follows because $|\widetilde{J}_0| = 2|J_0|$.\qedhere
  \end{proofcase}
\end{proof}

Having obtained Theorem~\ref{thm:projection} and Lemma~\ref{conditionsensuringP1P4}, we are finally
prepared to prove Theorem~\ref{thm:2d-andrew}.

\section{Proof of Theorem~\ref{thm:2d-andrew}}\label{sec:andrew}

\noindent
Here, we prove that the identity operator on $H^p(H^q)$ factors through any operator
$T : H^p(H^q)\to H^p(H^q)$ having large diagonal with respect to the bi-parameter Haar system (see
Theorem~\ref{thm:2d-andrew}).  The basic pattern of our argument below is the following: we
carefully construct $\{\mathscr{B}_R : R\in\drec\}$ satisfying the auxiliary condition
(R\ref{enu:r1})--(R\ref{enu:r6}) (see Section~\ref{sec:projections}).  Moreover, these collections
are chosen in such a way that we are able to find signs $\varepsilon_Q\in\{\pm 1\}$,
$Q\in\bigcup_{R\in\drec}\mathscr{B}_R$, for which the block basis
$b_R^{(\varepsilon)} = \sum_{Q\in\mathscr{B}_R}\varepsilon_Q h_Q$, $R\in\drec$ has the following
properties: $|\langle T b_{R_1}^{(\varepsilon)}, b_{R_2}^{(\varepsilon)}\rangle|$ is small in the
precise sense of~\eqref{eq:induction-properties:b} below whenever
$R_1,R_2\in\drec$ are distinct,
and
\begin{equation*}
  |\langle T b_R^{(\varepsilon)}, b_R^{(\varepsilon)} \rangle|
  \geq \delta \|b_R^{(\varepsilon)}\|_2^2,
  \qquad R\in\drec.
\end{equation*}
Thereafter we apply the two main results of the preceding section, Theorem~\ref{thm:projection} and
Lemma~\ref{conditionsensuringP1P4}, and finally we construct a factorization of the identity
operator through $T$.

\begin{proof}[Proof of Theorem~\ref{thm:2d-andrew}]
  Let $1 \leq p,q < \infty$ and $\delta > 0$, and let $T : H^p(H^q)\to H^p(H^q)$ be an operator such
  that
  \begin{equation}\label{eq:large-diagonal-0}
    |\langle T h_R, h_R \rangle| \geq \delta |R|,
    \qquad R\in \drec.
  \end{equation}

  We define $\gamma = (\gamma_R : R\in\drec)$ by
  \begin{equation*}
    \gamma_R
    = \frac{\overline{\langle T h_R, h_R\rangle}}{|\langle T h_R, h_R\rangle|},
    \qquad R\in \drec.
  \end{equation*}
  Recall that in~\eqref{eq:multiplier} we defined the Haar multiplier $M_\gamma$ which satisfies
  $\|M_\gamma\| = 1$, and $\langle (TM_\gamma) h_R, h_R \rangle \geq \delta |R|$. Thereby, replacing
  $T$ with $TM_\gamma$, it suffices to consider the special case where
  \begin{equation}\label{eq:large-diagonal-1}
    \langle T h_R, h_R \rangle \geq \delta |R|,
    \qquad R\in \drec.
  \end{equation}

  \begin{proofstep}[Overview]
    Let $0 < \eta\le 1$. The main part of the proof consists of choosing collections of dyadic
    rectangles $\mathscr B_{R}$, $R \in \drec$ and suitable signs $\varepsilon=(\varepsilon_{Q})$
    such that $b_{R}^{(\varepsilon)} = \sum_{Q\in \mathscr B_{R}} \varepsilon_{Q} h_{Q}$ satisfies
    the following:
    \begin{itemize}
    \item The closed linear span of $\{b_R^{(\varepsilon)} : R\in\drec\}$ is complemented and
      isomorphic to $H^p(H^q)$.
    \item There is an operator $U : H^p(H^q) \to H^p(H^q)$ given by
      \begin{equation*}
        U(f) = \sum_{R\in\drec}
        \frac{\langle f, b_R^{(\varepsilon)}\rangle}
        {\langle Tb_R^{(\varepsilon)}, b_R^{(\varepsilon)}\rangle}
        b_R^{(\varepsilon)}.
      \end{equation*}
    \item For every finite linear combination $g = \sum_{R\in\drec} \lambda_R b_R^{(\varepsilon)}$
      we have
      \begin{equation*}
        \|UTg - g\|_{H^p(H^q)} \leq \frac{\eta}{2} \|g\|_{H^p(H^q)}.
      \end{equation*}
    \end{itemize}
  \end{proofstep}

  \begin{proofstep}[Preparation]
    Given $R = I\times J \in \drec$ we write
    \begin{subequations}\label{eq:decomp}
      \begin{equation}
        T h_{R} = \alpha_{R} h_{R} + r_{R},
      \end{equation}
      where
      \begin{equation}
        \alpha_{R} = \frac{\langle T h_{R}, h_{R} \rangle}{|R|}
        \qquad\text{and}\qquad
        r_{R} = \sum_{S\neq R}
        \frac{\langle T h_{R}, h_{S} \rangle}{|S|} h_{S}.
      \end{equation}
    \end{subequations}
    We note the estimates
    \begin{equation}\label{eq:a-estimate}
      \delta \leq \alpha_{R} \leq \|T\|
      \qquad\text{and}\qquad
      \|r_{R}\|_{H^p(H^q)} \leq 2\|T\| |I|^{1/p} |J|^{1/q}.
    \end{equation}
  \end{proofstep}

  \begin{proofstep}[Inductive construction of $b_{R}^{(\varepsilon)}$]
    We will now inductively define the block basis $\{ b_{R}^{(\varepsilon)}\, :\, R \in \drec \}$.
    For fixed $R\in \drec$, the block basis element $b_{R}^{(\varepsilon)}$ is determined by a
    collection of dyadic rectangles $\mathscr B_{R}\subset \drec$ and a suitable choice of signs
    $\varepsilon=(\varepsilon_{Q})$ and is of the following form:
    \begin{equation}\label{eq:block_basis}
      b_{R}^{(\varepsilon)} = \sum_{Q\in \mathscr B_{R}} \varepsilon_{Q} h_{Q}.
    \end{equation}
    From now on, we systematically use the following rule: whenever $\drindex(R) = i$ we set
    \begin{equation*}
      \mathscr B_i = \mathscr B_{R},
      \qquad
      b_i^{(\varepsilon)} = b_{R}^{(\varepsilon)},
      \qquad
      h_i = h_R.
    \end{equation*}
    We will construct collections $\{\mathscr B_i : i\in\mathbb{N}_0\}$ satisfying the auxiliary
    condition~(R\ref{enu:r1})--(R\ref{enu:r6}) and choose signs $\varepsilon=(\varepsilon_{Q})$ such
    that
    \begin{subequations}\label{eq:induction-properties}
      \begin{align}
        \sum_{j=0}^{i-1} |\langle T b_j^{(\varepsilon)}, b_i^{(\varepsilon)}\rangle|
        + |\langle b_i^{(\varepsilon)}, T^*b_j^{(\varepsilon)}\rangle|
        & \leq \eta\delta 4^{-i-2},\qquad i\in\mathbb{N},
          \label{eq:induction-properties:b}\\
        |\langle T b_i^{(\varepsilon)}, b_i^{(\varepsilon)} \rangle|
        & \geq \delta \|b_i^{(\varepsilon)}\|_2^2,\qquad i\in\mathbb{N}_0.
          \label{eq:induction-properties:c}
      \end{align}
    \end{subequations}

    The induction begins by putting
    \begin{equation}\label{eq:induction-init}
      \mathscr B_0 = \{[0,1)\times [0,1)\}
      \qquad\text{and}\qquad
      b_0^{(\varepsilon)} = h_{[0,1)\times [0,1)}.
    \end{equation}
    Consequently, $\mathscr{X}_{[0,1)\times [0,1)} = \mathscr{Y}_{[0,1)\times [0,1)} = \{[0,1)\}$
    and $\mu([0,1)\times [0,1)) = 0$, $\nu([0,1)\times [0,1)) = 0$.  Obviously, $\{\mathscr{B}_0\}$
    satisfies~(R\ref{enu:r1})--(R\ref{enu:r6}).

    Let $i_0\in \mathbb N$. At this stage we assume that
    \begin{itemize}
    \item $\{\mathscr B_j : 0\leq j\leq i_0-1\}$ satisfies the auxiliary
      condition~(R\ref{enu:r1})--(R\ref{enu:r6}).

    \item the block basis $\{b_j^{(\varepsilon)} : 0\leq j\leq i_0-1\}$ given
      by~\eqref{eq:block_basis} satisfies~\eqref{eq:induction-properties} (for $0\leq i\leq i_0-1$).
    \end{itemize}
    Now, we turn to the construction of $\mathscr B_{i_0}$ and $\varepsilon_{Q}$, where
    $Q\in \mathscr B_{i_0}$. In the first step we will find $\mathscr B_{i_0}$
    in~\eqref{eq:induction-step:block_basis_collection:dfn}, and only then we will choose the signs
    $\varepsilon_Q$, $Q\in\mathscr{B}_{i_0}$ in~\eqref{eq:diagonal_estimate-1}.  The collection
    $\mathscr B_{i_0}$ and the signs $\varepsilon_{Q}$, $Q\in\mathscr{B}_{i_0}$ then determine
    $b_{i_0}^{(\varepsilon)}$.
  \end{proofstep}

  \begin{proofstep}[Construction of $\mathscr B_{i_0}$]
    Let $I_0\times J_0\in \drec$ be such that $\drindex(I_0\times J_0) = i_0$.  We distinguish
    between the four cases
    \begin{equation*}
      |I_0|<|J_0|, J_0=[0,1),
      \qquad |I_0|<|J_0|, J_0\neq[0,1),
    \end{equation*}
    and
    \begin{equation*}
      |I_0|\geq |J_0|, I_0=[0,1),
      \qquad |I_0|\geq |J_0|, I_0\neq[0,1).
    \end{equation*}

    \noindent
    \begin{proofcase}[Case~1: $|I_0| < |J_0|$]
      Here, we will construct the collection $\mathscr B_{I_0\times J_0}$, for which the index
      rectangle $I_0\times J_0$ is ``below the diagonal''.

      First, we define
      \begin{equation}\label{eq:induct-case-1:mu-nu:2}
        \nu(I_0\times J_0) = \nu(I_0'\times J_0)
        \qquad\text{and}\qquad
        \mathscr Y_{I_0\times J_0} = \mathscr Y_{I_0'\times J_0},
      \end{equation}
      where $I_0'\in \dint$ is the unique interval such that $I_0'\supset I_0$ and $|I_0'|=|J_0|$.
      We remark that $\mu(I_0\times J_0)$ will be defined at the end of the proof
      in~\eqref{eq:induction-step:block_basis_collection:a}.

      \noindent
      \begin{minipage}[H]{.75\textwidth}
        \textsc{Case~1.a: $J_0 = [0,1)$.}  Here, we know that $I_0\neq [0,1)$. Recall that
        $\widetilde I_0$ denotes the dyadic predecessor of $I_0$, and note that
        $\mathscr B_{\widetilde I_0\times [0,1)}$ has already been defined.  The collections indexed
        by the black rectangles have already been constructed.  Here, we determine the collections
        for the gray rectangles.  The white ones will be treated later.
      \end{minipage}
      \begin{minipage}[H]{.25\textwidth}
        \begin{center}
          \includegraphics[scale=0.4]{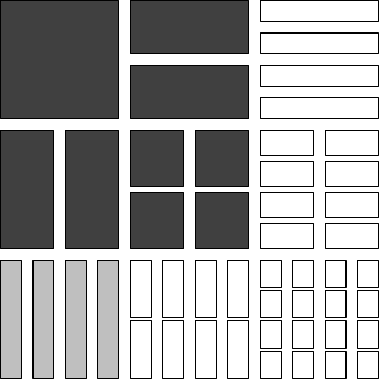}
        \end{center}
      \end{minipage}

      \bigskip\noindent
      Note that $[0,|I_0|)\times [0,1)\drlesseq I_0\times [0,1)$, and define the integer
      $\kappa(I_0\times [0,1))$ by
      \begin{equation*}
        \kappa(I_0\times [0,1))
        = \max\{\mu(Q) : Q\drless [0,|I_0|)\times [0,1)\}.
      \end{equation*}
      Recall that for a dyadic interval $K_0$ we denote its left half by $K_0^\ell$ and its right
      half by $K_0^r$.  Following the basic construction of Gamlen-Gaudet~\cite{gamlen_gaudet:1973},
      we proceed as follows.  The set $X_{\widetilde{I_0}\times [0,1)}$ has already been defined in
      a previous step of the construction.  Now we put
      \begin{equation*}
        X_{I_0\times [0,1)} =
        \begin{cases}
          \bigcup\{ K_0^\ell : K_0\in\mathscr{D}_{\kappa(I_0\times[0,1))},\,
          K_0\subset X_{\widetilde{I_0}\times [0,1)}\} &\text{if}\ \ I_0 = \widetilde{I_0}^\ell,\\
          \bigcup\{ K_0^r : K_0\in\mathscr{D}_{\kappa(I_0\times[0,1))},\, K_0\subset
          X_{\widetilde{I_0}\times [0,1)}\} &\text{if}\ \ I_0 = \widetilde{I_0}^r.
        \end{cases}
      \end{equation*}
      To finish the construction in Case~1.\textsc{a}, we define the family of high frequency covers
      of the set $X_{I_0\times [0,1)}\times [0,1)$ by putting
      \begin{equation}\label{eq:rectangles-case_1}
        \mathscr F_m = \{K\times [0,1)\in \drec\, :\, K\in\dint_m,\, K\subset X_{I_0\times[0,1)}\},
      \end{equation}
      for all $m > \kappa(I_0\times[0,1))$, see Figure~\ref{fig:building_blocks-1}, and observe that
      \begin{equation}\label{eq:rectangles-case_1a:pointset}
        \bigcup \mathscr{F}_m = X_{I_0\times [0,1)}\times [0,1).
      \end{equation}

      \begin{figure}[H]
        \begin{center}
          \includegraphics{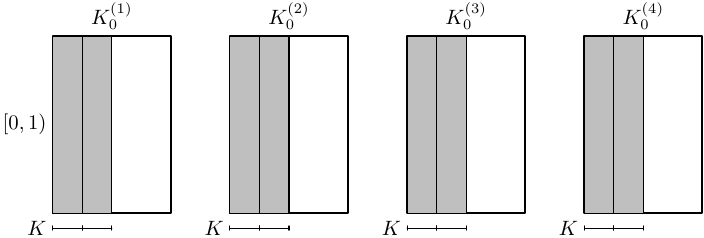}
        \end{center}
        \caption{The above figure depicts an instance of $\mathscr F_m$ in
          Case~1.\textsc{a}. $K_0^{(k)}$ is a dyadic interval such that
          $K_0^{(k)}\times [0,1)\in \mathscr B_{\widetilde I_0\times [0,1)}$, and $K$ is a dyadic
          interval such that $K\times [0,1)\in \mathscr F_m$.}\label{fig:building_blocks-1}
      \end{figure}
      \noindent
      \begin{minipage}[H]{.75\textwidth}
        \textsc{Case~1.b: $J_0 \neq [0,1)$.}  The
        collections indexed by the black rectangles have already been constructed. Here, we
        determine the collections for the gray rectangles. The white ones will be treated later.
      \end{minipage}
      \begin{minipage}[H]{.25\textwidth}
        \begin{center}
          \includegraphics[scale=0.4]{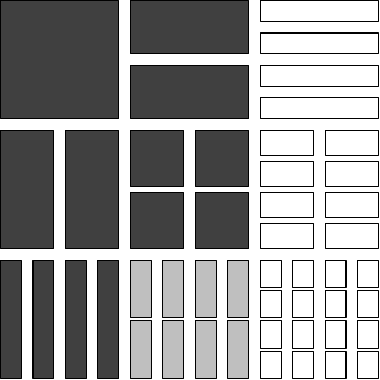}
        \end{center}
      \end{minipage}

      \bigskip\noindent By our induction hypothesis,
      $\{\mathscr{B}_{I\times J} : \drindex(I\times J) \leq i_0-1\}$
      satisfies~(R\ref{enu:r1})--(R\ref{enu:r6}).  Note that the set $X_{I_0\times [0,1)}$ is
      already defined.  To conclude the construction in Case~1.\textsc{b}, we define the high
      frequency covers of $X_{I_0\times [0,1)}\times Y_{I_0\times J_0}$ by
      \begin{equation}\label{eq:rectangles-case_1b}
        \mathscr F_m = \{K\times L_0\in \drec\, :\,
        K\in\dint_m,\, K\subset X_{I_0\times [0,1)},\ L_0\in \mathscr Y_{I_0\times J_0}
        \},
      \end{equation}
      for $m > \mu({I_0\times \widetilde J_0})$.
    \end{proofcase}

    \begin{proofcase}[Case~2: $|I_0|\geq |J_0|$]
      In this case, we will construct the collection $\mathscr B_{I_0\times J_0}$, for which the
      index rectangle $I_0\times J_0$ is ``on or above the diagonal''.

      First, we set
      \begin{equation}\label{eq:induct-case-2:mu-nu:2}
        \mu(I_0\times J_0) = \mu(I_0\times J_0')
        \qquad\text{and}\qquad
        \mathscr X_{I_0\times J_0} = \mathscr X_{I_0\times J_0'},
      \end{equation}
      where $J_0'\in\mathscr{D}$ is the unique dyadic interval such that $J_0'\supset J_0$ and
      $|J_0'| = 2|I|$ if $I\ne[0,1)$, and $J_0'=[0,1)$ if $I= [0,1)$.  We remark that
      $\nu(I_0\times J_0)$ will be defined at the end of the proof
      in~\eqref{eq:induction-step:block_basis_collection:b}.

      \noindent
      \begin{minipage}[H]{.75\textwidth}
        \textsc{Case~2.a: $I_0 = [0,1)$.} Note that $J_0\ne [0,1)$ and
        $\mathscr B_{[0,1)\times \widetilde J_0}$ has already been constructed. The collections
        indexed by the black rectangles have already been constructed. Here, we determine the
        collections for the gray rectangles. The white ones will be treated later.
      \end{minipage}
      \begin{minipage}[H]{.25\textwidth}
        \begin{center}
          \includegraphics[scale=0.4]{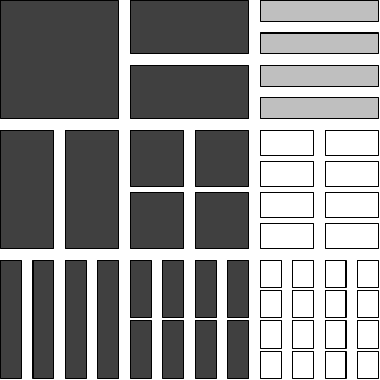}
        \end{center}
      \end{minipage}

      \bigskip\noindent
      Note that $[0,1)\times [0,|J_0|)\drlesseq [0,1)\times J_0$.  Define $\lambda([0,1)\times J_0)$
      to be
      \begin{equation}\label{eq:case-2a-lambda}
        \lambda([0,1)\times J_0) =
        \max\{\nu(Q) : Q\drless [0,1)\times [0,|J_0|)\}.
      \end{equation}
      Recall that for a dyadic interval $L_0$ we denote its left (=lower) half by $L_0^\ell$ and its
      right (=upper) half by $L_0^r$.  The set $Y_{[0,1)\times\widetilde{J}_0}$ has already been
      defined.  Now, put
      \begin{equation*}
        Y_{[0,1)\times J_0} =
        \begin{cases}
          \bigcup\{ L_0^\ell : L_0\in\mathscr{D}_{\lambda([0,1)\times J_0)},\, L_0\subset
          Y_{[0,1)\times \widetilde{J}_0}\}
          &\text{if}\ \ J_0 = \widetilde{J}_0^\ell,\\
          \bigcup\{ L_0^r : L_0\in\mathscr{D}_{\lambda([0,1)\times J_0)},\, L_0\subset
          Y_{[0,1)\times\widetilde{J}_0}\} &\text{if}\ \ J_0 = \widetilde{J}_0^r.
        \end{cases}
      \end{equation*}
      We define the family of high frequency covers of the set $[0,1)\times Y_{[0,1)\times J_0}$ by
      \begin{equation}\label{eq:rectangles-case_2}
        \mathscr F_m = \{[0,1)\times L\in \drec\, :\, L\in\dint_m,\, L\subset Y_{[0,1)\times J_0} \},
      \end{equation}
      for all $m > \lambda([0,1)\times J_0)$, see Figure~\ref{fig:building_blocks-2}, and observe
      that
      \begin{equation}\label{eq:rectangles-case_2a:pointset}
        \bigcup \mathscr{F}_m = [0,1)\times Y_{[0,1)\times J_0}.
      \end{equation}
      \begin{figure}[H]
        \begin{center}
          \includegraphics{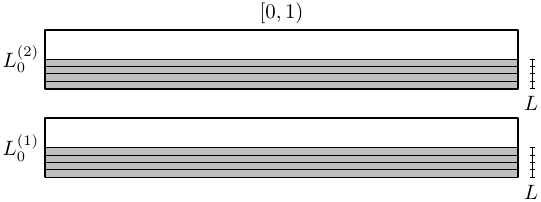}
        \end{center}
        \caption{The above figure depicts an instance of $\mathscr F_m$ in
          Case~2.\textsc{a}. $L_0^{(k)}$ is a dyadic interval such that
          $[0,1)\times L_0^{(k)}\in \mathscr B_{[0,1)\times \widetilde J_0}$, and $L$ is a dyadic
          interval such that $[0,1)\times L\in \mathscr F_m$.}
        \label{fig:building_blocks-2}
      \end{figure}

      \noindent
      \begin{minipage}[H]{.75\textwidth}
        \textsc{Case~2.b: $I_0 \neq [0,1)$.}  The collections indexed
        by the black rectangles have already been constructed.  Here, we determine the collections
        for the gray rectangles.
      \end{minipage}
      \begin{minipage}[H]{.25\textwidth}
        \begin{center}
          \includegraphics[scale=0.4]{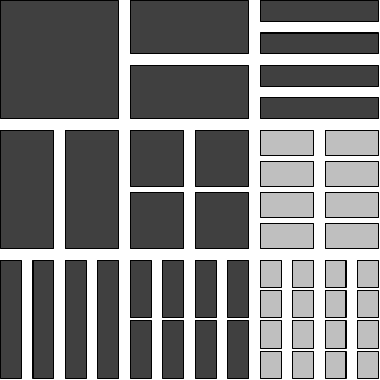}
        \end{center}
      \end{minipage}

      \bigskip\noindent By our induction hypothesis,
      $\{\mathscr{B}_{I\times J} : \drindex(I\times J) \leq i_0-1\}$
      satisfies~(R\ref{enu:r1})--(R\ref{enu:r6}).  At this stage of the proof, the set
      $Y_{[0,1)\times J_0}$ has already been constructed.  Now, we define the high frequency covers
      of $X_{I_0\times J_0}\times Y_{[0,1)\times J_0}$ by putting
      \begin{equation}\label{eq:rectangles-case_2b}
        \mathscr F_m
        = \{K_0\times L\, :\,
        K_0\in \mathscr X_{I_0\times J_0},\,
        L\in\dint_m,\, L\subset Y_{[0,1)\times J_0}
        \},
      \end{equation}
      whenever $m > \nu(\widetilde I_0\times J_0)$, see Figure~\ref{fig:building_blocks-3}.
      \begin{figure}[bht]
        \begin{center}
          \includegraphics{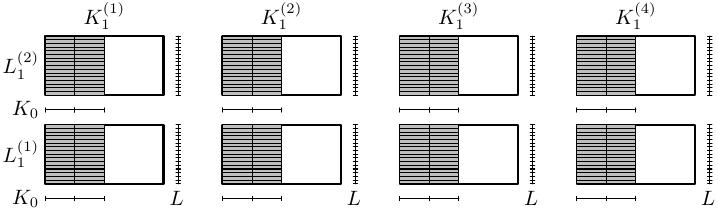}
        \end{center}
        \caption{The above figure depicts an instance of $\mathscr F_m$ in Case~2.\textsc{b}.  We
          have $K_1^{(k)}\in\mathscr X_{\widetilde I_0\times J_0}$,
          $L_1^{(\ell)}\in\mathscr Y_{\widetilde I_0\times J_0}$, and the dyadic interval $K_0$ is
          in $\mathscr X_{I_0\times J_0}$.  $\mathscr F_m$ is the collection of all the small gray
          rectangles.  We obtain $\mathscr{F}_m$ by leaving intact the intervals of the
          $x$-coordinate ($K_0\in \mathscr X_{I_0\times J_0}$) and using a high frequency cover --
          comprised of the intervals $L$ -- of the intervals
          $L_1^{(\ell)}\in \mathscr Y_{\widetilde I_0\times J_0}$.  The intervals
          $L_1^{(\ell)}\in \mathscr Y_{\widetilde I_0\times J_0}$ in this Figure are covering the
          exact same set as the intervals denoted by $L$ in Figure~\ref{fig:building_blocks-2},
          i.e. they cover $Y_{[0,1)\times J_0}$.}\label{fig:building_blocks-3}
      \end{figure}
    \end{proofcase}

    In each of the above
    cases~\eqref{eq:rectangles-case_1},~\eqref{eq:rectangles-case_1b},~\eqref{eq:rectangles-case_2},
    and~\eqref{eq:rectangles-case_2b} we define the following functions.  Firstly, let
    \begin{subequations}\label{eq:block-basis-candidate}
      \begin{equation}\label{eq:block-basis-candidate:a}
        f_m = \sum_{Q\in \mathscr F_m} h_Q,
      \end{equation}
      and secondly for any choice of signs $\varepsilon_Q \in \{-1,+1\}$, $Q\in \mathscr F_m$ put
      \begin{equation}\label{eq:block-basis-candidate:b}
        f_m^{(\varepsilon)} = \sum_{Q\in \mathscr F_m} \varepsilon_Q h_Q.
      \end{equation}
    \end{subequations}

    Now, we specify the value of $m$.  To this end, put
    \begin{equation}\label{eq:induction-step:disjoint_spectrum:0}
      k_{i_0} = \max\{ \mu(R),\nu(R) : R\in\drec,\ \drindex(R)\leq i_0-1\},
    \end{equation}
    and note that each $\mathscr F_m$, $m > k_{i_0}$ can be written as the product of two sets of
    intervals, i.e.
    \begin{equation*}
      \mathscr F_m = \{ K\times L : K\in\mathscr X_m,\ L\in \mathscr Y_m \},
      \qquad m > k_{i_0},
    \end{equation*}
    where the collections $\mathscr X_m$ and $\mathscr Y_m$, $m > k_{i_0}$, satisfy the following:
    \begin{itemize}
    \item $\mathscr{X}_m$ and $\mathscr{Y}_m$ are each a non-empty, finite collection of pairwise
      disjoint dyadic intervals of equal length, whenever $m > k_{i_0}$;
    \item $\mathscr{X}_m\cap\mathscr{X}_n=\emptyset$ or $\mathscr{Y}_m\cap\mathscr{Y}_n=\emptyset$
      whenever $m,n > k_{i_0}$ are distinct;
    \item the union of the sets in~$\mathscr{X}_m$ is independent of~$m > k_{i_0}$, and the
      union of the sets in~$\mathscr{Y}_m$ is independent of~$m > k_{i_0}$.
    \end{itemize}
    Thus, by Lemma~\ref{lem:thesequencefm}, we have that
    \begin{itemize}
    \item for each $g\in H^p(H^q)^*$, $\sup_{\gamma\in\Gamma}|\langle M_\gamma f_m, g\rangle|\to 0$
      as $m\to\infty;$
    \item for each $g\in H^p(H^q)$, $\sup_{\gamma\in\Gamma}|\langle M_\gamma g, f_m\rangle|\to 0$ as
      $m\to\infty$;
    \end{itemize}
    where we recall that $\Gamma$ denotes the unit ball of $\ell^\infty(\drec)$, and that
    $\gamma = (\gamma_R : R\in \drec)\in \Gamma$ defines the operator $M_\gamma$
    (see~\eqref{eq:multiplier}).  Hence, we can find an integer $m_{i_0} > k_{i_0}$ such that
    \begin{equation}\label{eq:induction-step:a}
      \sum_{j=0}^{i_0-1} |\langle T b_j^{(\varepsilon)}, f_{m_{i_0}}^{(\varepsilon)}\rangle|
      + |\langle f_{m_{i_0}}^{(\varepsilon)}, T^*b_j^{(\varepsilon)}\rangle|
      \leq \eta \delta 4^{-i_0-2},
    \end{equation}
    for all choices of signs $\varepsilon_{K\times L}$, $K\times L \in \mathscr F_{m_{i_0}}$.  Now,
    we put
    \begin{equation}\label{eq:induction-step:block_basis_collection:dfn}
      \mathscr B_{I_0\times J_0} = \mathscr B_{i_0} = \mathscr F_{m_{i_0}}.
    \end{equation}
    \begin{subequations}\label{eq:induction-step:block_basis_collection}
      If $I_0\times J_0$ is a ``Case~1'' rectangle, i.e. $|I_0| < |J_0|$, then
      \begin{equation}\label{eq:induction-step:block_basis_collection:a}
        \mu(I_0\times J_0) = m_{i_0}
        \quad\text{and}\quad
        \mathscr{X}_{I_0\times J_0}
        = \{ I\in\mathscr{D}_{m_{i_0}} : I\times J\in \mathscr{B}_{I_0\times J_0}\},
      \end{equation}
      and if $I_0\times J_0$ is a ``Case~2'' rectangle, i.e. $|I_0| \geq |J_0|$, then
      \begin{equation}\label{eq:induction-step:block_basis_collection:b}
        \nu(I_0\times J_0) = m_{i_0}
        \quad\text{and}\quad
        \mathscr{Y}_{I_0\times J_0}
        = \{ J\in\mathscr{D}_{m_{i_0}} : I\times J\in \mathscr{B}_{I_0\times J_0}\}.
      \end{equation}
    \end{subequations}
    Thereby, we have completed the construction of $\mathscr B_{I_0\times J_0} = \mathscr B_{i_0}$.

    Reviewing the four cases Case~1.\textsc{a}, Case~1.\textsc{b}, Case~2.\textsc{a}, and
    Case~2.\textsc{b} of the construction we see that $\{\mathscr{B}_i : i\leq i_0\}$
    satisfies~(R\ref{enu:r1})--(R\ref{enu:r6}).
  \end{proofstep}

  \begin{proofstep}[Selecting the signs $\varepsilon$]
    Let $\varepsilon_{Q}\in\{\pm 1\}$, $Q \in \mathscr{B}_{i_0}$ be fixed.  We obtain
    from~\eqref{eq:decomp} and~\eqref{eq:block-basis-candidate}
    \begin{equation*}
      \langle T f_{m_{i_0}}^{(\varepsilon)}, f_{m_{i_0}}^{(\varepsilon)} \rangle
      = \sum_{Q\in \mathscr{B}_{i_0}} \alpha_{Q} |Q|
      + \langle f_{m_{i_0}}^{(\varepsilon)}, s_{m_{i_0}}^{(\varepsilon)} \rangle,
    \end{equation*}
    where
    \begin{equation*}
      s_{m_{i_0}}^{(\varepsilon)} = \sum_{Q\in \mathscr{B}_{i_0}}
      \varepsilon_{Q} r_{Q}.
    \end{equation*}
    By~\eqref{eq:decomp} we have $\langle h_{Q}, r_{Q} \rangle = 0$, $Q\in\drec$, and consequently
    \begin{equation}\label{eq:off-diag-rand}
      \langle f_{m_{i_0}}^{(\varepsilon)}, s_{m_{i_0}}^{(\varepsilon)} \rangle
      = \sum \varepsilon_{Q_0} \varepsilon_{Q_1}
      \langle h_{Q_0}, r_{Q_1} \rangle,
    \end{equation}
    where the sum is taken over all $Q_0, Q_1\in \mathscr{B}_{i_0}$ with $Q_0\neq Q_1$.  Let
    $\cond_\varepsilon$ denote the average over all possible choices of signs $\varepsilon_{Q}$,
    $Q\in \mathscr{B}_{i_0}$.  Taking expectations we obtain from~\eqref{eq:off-diag-rand} that
    \begin{equation*}
      \cond_\varepsilon \langle f_{m_{i_0}}^{(\varepsilon)}, s_{m_{i_0}}^{(\varepsilon)} \rangle = 0.
    \end{equation*}
    This gives us
    \begin{equation*}
      \cond_\varepsilon \langle T f_{m_{i_0}}^{(\varepsilon)}, f_{m_{i_0}}^{(\varepsilon)} \rangle
      = \sum_{Q\in \mathscr{B}_{i_0}} \alpha_{Q} |Q|.
    \end{equation*}
    Hence, in view of~\eqref{eq:a-estimate}, there exists at least one $\varepsilon$ such that
    \begin{equation}\label{eq:diagonal_estimate-1}
      |\langle T f_{m_{i_0}}^{(\varepsilon)}, f_{m_{i_0}}^{(\varepsilon)} \rangle|
      \geq \sum_{Q\in \mathscr{B}_{i_0}} \alpha_{Q} |Q| \ge \delta \|f_{m_{i_0}}^{(\varepsilon)}\|_2^2.
    \end{equation}
    We complete the inductive construction by choosing $\varepsilon$ according
    to~\eqref{eq:diagonal_estimate-1} and define
    \begin{equation}\label{eq:induction-step:block_basis}
      b_{I_0\times J_0} ^{(\varepsilon)}
      = b_{i_0}^{(\varepsilon)}
      = f_{m_{i_0}}^{(\varepsilon)}.
    \end{equation}
    Hence,~\eqref{eq:induction-properties:c} holds for $i=i_0$, while~\eqref{eq:induction-step:a}
    ensures that~\eqref{eq:induction-properties:b} holds for $i=i_0$.
  \end{proofstep}

  \begin{proofstep}[Essential properties of our inductive construction]
    Since each of the finite collections $\{\mathscr{B}_i : i\leq i_0\}$, $i_0\in\mathbb{N}_0$,
    satisfies~(R\ref{enu:r1})--(R\ref{enu:r6}), Remark~\ref{rem:aux-condition} asserts that the
    infinite collection $\{\mathscr{B}_i : i\in\mathbb{N}_0\}$ satisfies
    (R\ref{enu:r1})--(R\ref{enu:r6}), and hence, by Lemma~\ref{conditionsensuringP1P4}, it satisfies
    the local product condition (P\ref{enu:p1})--(P\ref{enu:p4}) with constants $C_X = C_Y=1$.

    For $1 \leq u,v < \infty$ and $I\times J\in \drec$,
    Lemma~\ref{lem:thesequencefm}\eqref{lem:thesequencefm:0}--\eqref{lem:thesequencefm:1} together
    with~\eqref{eq:XY-inclusions} and~(P\ref{enu:p3}) gives us the following mixed-norm estimates
    for $b_{I\times J}^{(\varepsilon)}$:
    \begin{subequations}\label{eq:mixed-norm_estimates}
      \begin{align}
        \|b_{I\times J}^{(\varepsilon)}\|_{H^u(H^v)}
        & = |I|^{1/u} |J|^{1/v} = \|h_{I\times J}\|_{H^u(H^v)},
          \label{eq:mixed-norm_estimates:a}\\
        \|b_{I\times J}^{(\varepsilon)}\|_{H^u(H^v)^*}
        & = |I|^{1-1/u} |J|^{1-1/v} = \|h_{I\times J}\|_{H^u(H^v)^*}.
          \label{eq:mixed-norm_estimates:b}
      \end{align}
    \end{subequations}

    The estimates~\eqref{eq:induction-properties:b} and~\eqref{eq:induction-properties:c} show that
    the block basis $\{b_i^{(\varepsilon)}\}$ almost-diagonalizes $T$ in the following precise
    sense:
    \begin{align}
      |\langle T b_i^{(\varepsilon)}, b_i^{(\varepsilon)} \rangle|
      & \geq \delta \|b_i^{(\varepsilon)}\|_2^2,
        \qquad i\in \mathbb{N}_0,
        \label{eq:almost-eigenvectors:large_diagonal}\\
      \sum_{j=0}^{i-1} |\langle T b_j^{(\varepsilon)}, b_i^{(\varepsilon)}\rangle| + |\langle
      T b_i^{(\varepsilon)}, b_j^{(\varepsilon)}\rangle|
      & \leq\eta\delta 4^{-i-2},
        \qquad i\in\mathbb{N}.
        \label{eq:almost-eigenvectors:0}
    \end{align}
  \end{proofstep}

  \begin{proofstep}[Putting it together]
    The basic model of argument presented below can be traced to the seminal paper of Alspach,
    Enflo, and Odell~\cite{alspach_enflo_odell:1977}.  Since $\{\mathscr B_{I\times J}\}$ satisfies
    the local product condition~(P\ref{enu:p1})--(P\ref{enu:p4}) with constants $C_X=C_Y=1$, we
    obtain from Theorem~\ref{thm:projection} the following.  First, let
    $Y = \overline{\spn}\{b_i^{(\varepsilon)}\,:\,i\in\mathbb{N}_0\}\subset H^p(H^q)$ and let
    $B_\varepsilon : H^p(H^q)\to Y$ denote the unique linear extension of
    $B_\varepsilon h_i = b_i^{(\varepsilon)}$, $i\in\mathbb{N}_0$, then by
    Theorem~\ref{thm:projection}
    \begin{equation}\label{eq:commutative-diagram:preimage}
      \vcxymatrix{H^p(H^q) \ar[r]^{I_{H^p(H^q)}} \ar[d]_{B_\varepsilon} & H^p(H^q)\\
        Y \ar[r]_{I_Y} & Y \ar[u]_{A_\varepsilon|Y}}
      \qquad \|B_\varepsilon\|=\|A_\varepsilon\| = 1,
    \end{equation}
    where we recall that $A_\varepsilon : H^p(H^q)\to H^p(H^q)$ denotes the operator given by
    \begin{equation*}
      A_\varepsilon f
      = \sum_{i=0}^\infty \frac{\langle f, b_i^{(\varepsilon)}\rangle}{\|b_i\|_2^2} h_i,
      \qquad f\in H^p(H^q).
    \end{equation*}

    Secondly, we put
    \begin{equation*}
      \gamma_i
      = \frac{\|b_i\|_2^2}{\langle T b_i^{(\varepsilon)}, b_i^{(\varepsilon)}\rangle},
      \qquad i\in\mathbb{N}_0.
    \end{equation*}
    Recall that $M_\gamma$ was defined in~\eqref{eq:multiplier} as the linear extension of
    $M_\gamma h_i = \gamma_i h_i$, $i\in\mathbb{N}_0$.  The operator norm of $M_\gamma$ is
    $\sup_{i\in\mathbb{N}_0} |\gamma_i| \leq \frac{1}{\delta}$
    by~\eqref{eq:almost-eigenvectors:large_diagonal}.  Define $U : H^p(H^q)\to Y$ by
    $U = B_\varepsilon M_\gamma A_\varepsilon$ and note that
    \begin{equation}\label{eq:almost-inverse}
      U(f) = \sum_{i=0}^\infty
      \frac{\langle f, b_i^{(\varepsilon)}\rangle}
      {\langle Tb_i^{(\varepsilon)}, b_i^{(\varepsilon)}\rangle}
      b_i^{(\varepsilon)},
      \qquad f\in H^p(H^q).
    \end{equation}
    The above estimates for the norms of the operators $A_\varepsilon$, $B_\varepsilon$, and $M_\gamma$ yield
    \begin{equation}\label{eq:almost-inverse:bounded}
      \|U : H^p(H^q)\to Y\|_{H^p(H^q)}
      \leq \|M_\gamma\|\, \|B_\varepsilon\|\, \|A_\varepsilon\|
      \leq \frac{1}{\delta}.
    \end{equation}

    Thirdly, observe that for all $g = \sum_{i=0}^\infty \lambda_i b_i^{(\varepsilon)} \in Y$, we
    have the identity
    \begin{equation}\label{eq:almost-inverse:identity}
      UTg - g
      = \sum_{i=1}^\infty \sum_{j=0}^{i-1} \lambda_j
      \frac{\langle T b_j^{(\varepsilon)}, b_i^{(\varepsilon)}\rangle}
      {\langle Tb_i^{(\varepsilon)}, b_i^{(\varepsilon)}\rangle}
      b_i^{(\varepsilon)}
      + \lambda_i
      \frac{\langle T b_i^{(\varepsilon)}, b_j^{(\varepsilon)}\rangle}
      {\langle Tb_j^{(\varepsilon)}, b_j^{(\varepsilon)}\rangle}
      b_j^{(\varepsilon)}.
    \end{equation}
    Using that $\|b_j^{(\varepsilon)}\|_{H^p(H^q)}\leq 1$, $j\in\mathbb{N}_0$, we obtain
    \begin{equation}\label{eq:U-estimate:1}
      \|UTg - g\|_{H^p(H^q)}
      \leq  \sum_{i=1}^\infty \sum_{j=0}^{i-1} |\lambda_j|
      \frac{|\langle T b_j^{(\varepsilon)}, b_i^{(\varepsilon)}\rangle|}
      {|\langle Tb_i^{(\varepsilon)}, b_i^{(\varepsilon)}\rangle|}
      + |\lambda_i|
      \frac{|\langle T b_i^{(\varepsilon)}, b_j^{(\varepsilon)}\rangle|}
      {|\langle Tb_j^{(\varepsilon)}, b_j^{(\varepsilon)}\rangle|}.
    \end{equation}
    Now, we will make the following two observations: The first is
    that~\eqref{eq:mixed-norm_estimates:b} implies $\|b_j^{(\varepsilon)}\|_{H^p(H^q)^*} \leq 1$,
    and thus by~\eqref{eq:mixed-norm_estimates:a} and~\eqref{eq:ordering-estimate}, we obtain
    \begin{equation*}
      \|g\|_{H^p(H^q)}
      \geq \|g\|_{H^p(H^q)} \|b_j^{(\varepsilon)}\|_{H^p(H^q)^*}
      \geq |\langle g, b_j^{(\varepsilon)} \rangle|
      = |\lambda_j| \|b_j^{(\varepsilon)}\|_2^2
      \geq \frac{1}{(1+\sqrt{j})^2} |\lambda_j|,
    \end{equation*}
    for all $j\in\mathbb{N}_0$.  The second observation is that
    $|\langle T b_j^{(\varepsilon)}, b_j^{(\varepsilon)} \rangle|\geq
    \frac{\delta}{(1+\sqrt{j})^2}$, $j\in\mathbb{N}_0$, which is a consequence
    of~\eqref{eq:almost-eigenvectors:large_diagonal},~\eqref{eq:mixed-norm_estimates:a},
    and~\eqref{eq:ordering-estimate}.  These two observations yield the following estimate:
    \begin{equation*}
      \frac{|\lambda_j|}{|\langle T b_i^{(\varepsilon)}, b_i^{(\varepsilon)}\rangle|}
      \leq \frac{1}{\delta}\|g\|_{H^p(H^q)} (1+\sqrt{i})^2(1+\sqrt{j})^2,
      \qquad j \neq i.
    \end{equation*}
    Inserting this estimate into~\eqref{eq:U-estimate:1} and
    applying~\eqref{eq:almost-eigenvectors:0} yields
    \begin{align*}
      \|UTg - g\|_{H^p(H^q)}
      & \leq \frac{1}{\delta} \|g\|_{H^p(H^q)} \sum_{i=1}^\infty (1+\sqrt{i})^4\sum_{j=0}^{i-1}
        |\langle T b_j^{(\varepsilon)}, b_i^{(\varepsilon)}\rangle|
        + |\langle T b_i^{(\varepsilon)}, b_j^{(\varepsilon)}\rangle|\\
      & \leq \frac{\eta}{2} \|g\|_{H^p(H^q)}.
    \end{align*}
    To see the latter estimate note that
    $\sum_{i=1}^\infty (1+\sqrt{i})^4 4^{-i} \leq 4 \sum_{i=1}^\infty (1+i)^2 4^{-i} =
    4\frac{53}{27}\leq 8$.

    Finally, let $J : Y\to H^p(H^q)$ denote the inclusion operator given by $Jy = y$. Since we
    assumed that $0 < \eta\le 1$, the operator $UTJ$ is invertible, and
    its inverse has norm at most $(1-\frac{\eta}{2})^{-1}\le 1+\eta$.  Now we define the operator
    $V : H^p(H^q)\to Y$ by $(UTJ)^{-1}U$ and observe that
    \begin{equation}\label{eq:commutative-diagram:image}
      \vcxymatrix{%
        Y \ar[rr]^{I_Y} \ar[dd]_J \ar[rd]_{UTJ} & & Y\\
        & Y \ar[ru]^{(UTJ)^{-1}} &\\
        H^p(H^q) \ar[rr]_T & & H^p(H^q) \ar[lu]_U \ar[uu]_V
      }
      \qquad \|J\|\|V\| \leq (1+\eta)/\delta.
    \end{equation}
    Merging the commutative diagram~\eqref{eq:commutative-diagram:preimage}
    with~\eqref{eq:commutative-diagram:image} yields
    \begin{equation*}
      \vcxymatrix{%
        H^p(H^q) \ar[rr]^{I_{H^p(H^q)}} \ar[d]_{B_\varepsilon} & & H^p(H^q)\\
        Y \ar[rr]^{I_Y} \ar[dd]_J \ar[rd]_{UTJ} & & \ar[u]_{A_\varepsilon|Y}Y\\
        & Y \ar[ru]^{(UTJ)^{-1}} &\\
        H^p(H^q) \ar[rr]_T & & H^p(H^q) \ar[lu]_U \ar[uu]_V
      }
    \end{equation*}
    where $\|B_\varepsilon\|=\|A_\varepsilon\| = 1$ and $\|J\|\|V\| \leq (1+\eta)/\delta$, which
    concludes the proof of Theorem~\ref{thm:2d-andrew}.\qedhere
  \end{proofstep}
\end{proof}

\bibliographystyle{abbrv}
\bibliography{bibliography}

\end{document}